\def\ZZ{\mathbb{Z}}
\def\d{ {\cal D} }
\def\c{ {\cal C} }
\def\a{ {\cal A} }
\def\h{ {\cal H} }
\def\ele{ {\cal L} }
\def\b{ {\cal B} }
\def\u{ {\cal U} }
\def\ii{ {\cal I} }
\def\t{ {\cal T} }
\def\e{ {\cal E} }
\def\p{ {\cal P} }
\def\k{ {\cal K} }
\def\f{ {\cal F} }
\def\x{ {\cal X} }
\def\v{ {\cal V} }
\def\w{ {\cal W} }
\def\q{ {\cal Q} }
\def\xx{ {\bf x} }
\def\XX{ {\bf X} }
\def\YY{ {\bf Y} }
\def\ZZ{ {\bf Z} }
\def\yy{ {\bf y} }
\def\zz{ {\bf z} }
\def\ww{ {\bf w} }
\def\ee{ {\bf e} }
\def\vv{ {\bf v} }
\def\aa{ {\bf a} }
\def\g1{ \mathfrak{g}_1  }
\newtheorem{teo}{Theorem}[section]
\newtheorem{prop}[teo]{Proposition}
\newtheorem{lem}[teo]{Lemma}
\newtheorem{defi}[teo]{Definition}
\theoremstyle{definition}
\newtheorem{rem}[teo]{Remark}
\newtheorem{ejems}[teo]{Examples}
\newtheorem{nota}[teo]{Notation}
\title{A non commutative K\"ahler structure on the Poincar\'e disk of a C$^*$-algebra}
\author{E. Andruchow, G. Corach and L. Recht}
\begin{document}

\maketitle 

\begin{abstract}
We study the Poincar\'e disk $\d=\{z\in\a: \|z\|<1\}$ of a C$^*$-algebra $\a$ as a homogeneous space under the action of an appropriate Banach-Lie group $\u(\theta)$ of $2\times 2$ matrices with entries in $\a$. We define on $\d$ a homogeneous K\"ahler structure in a non commutative sense. In particular, this K\"ahler structure defines on  $\d$  a homogeneous symplectic structure under the action of $\u(\theta)$. This action has a  moment map that we explicitly compute. In the presence of a trace in $\a$, we show that the moment map has a convex image when restricted to appropriate subgroups of $\u(\theta)$, resembling the classical result of Atiyah-Guillmien-Sternberg.
\end{abstract}
\bigskip

{\bf 2010 MSC:}  53D20, 53D50, 53C55, 53C60, 46L05, 58B20, 22E65, 46L08.

{\bf Keywords:}  Positive invertible operator, Poincar\'e disk, C$^*$-algebra, K\"ahler manifold, symplectic manifold. 
\section{Introduction}
 In the present paper we study the homogeneous geometry of the Poincar\'e disk  
$$
\d=\{z\in\a: \|z\|<1\}
$$
or, equivalently, the Poincar\'e half-space 
$$
\h=\{h\in\a: Im \ h \hbox{ is positive and invertible}\}
$$
 of a C$^*$-algebra  $\a$, under the action of an appropriate Banach-Lie group $\u(\theta)$ of $2\times 2$ matrices with entries in $\a$ (the group that preserves a quadratic form $\theta$ in $\a^2$ with values in $\a$). A main motivation for this is the following: if $G$ is the group of invertible elements of $\a$ and $G^+$ is the set of positive elements of $G$, then $G^+$ can be seen as the configuration space of a quantum mechanical system, whose elements are equivalent metrics on a fixed Hilbert space ${\cal L}$ - the element $a\in G^+$ defines the inner product $\langle \xi , \eta \rangle_a=\langle a\xi , \eta \rangle$, for $\xi,\eta\in{\cal L}$. The tangent bundle can be considered as  the phase space of the system, and also the bundle of observables associated to the different metrics in $\langle\ \cdot \ ,\ \cdot \ \rangle_a$. The set $\{(a,X): a\in G^+, X\in(TG^+)_a\}$ can be identified with 
$\h=\{X+i \ a: X\in\a, X^*=X, a\in G^+\}$. Thus, we have a clear identification between $TG^+$ and $\h$. On the other hand, $\d$ and $\h$ can be identified by means of a (Moebius) diffeomorphism.

The present paper is the third in a series. In the first paper \cite{tejemas} of this series, we studied some properties of $\d$ that derived from its structure  as a homogeneous reductive space: connection, geodesics, metric. In the second \cite{esfera de R} we concentrated on properties of $\d$ as an open subset of the projective line over the algebra $\a$. Mainly, the relationship between geodesics in $\d$ and the appropriate notion of cross-ratio of four points in the projective line over $\a$.

Here, we focus on the properties of $\d$ that derive from its structure as a {\it homogeneous K\"ahler manifold} in a non-commutative sense that we define. $\d$ turns out to be a homogeneous symplectic manifold under the action of the group $\u(\theta)$. This action  has a well defined moment map  that we compute explicitly. In the presence of a trace in $\a$, we show that this moment map  has a convex image when restricted to appropriate subgroups of $\u(\theta)$, resembling the classical result of Atiyah-Guillemin-Sternberg.

We briefly explain here the meaning of "non commutative K\"ahler structure".  These constructions are carried out in the space $\q_\rho$ - a space of idempotents in $M_2(\a)$ ($2\times 2$ matrices with entries in $\a$) which is equivalent to $\d$ and more convenient for these computations.  First we define a C$^*$-algebra bundle $\c$ on $\q_\rho$ whose fibers are C$^*$-algebras isomorphic to $\a$, that we call the {\it coefficient bundle}. It is the endomorphism bundle of the tautological bundle $\xi$ on $\q_\rho$:
\begin{equation}\label{tautologico}
\xi: (q,\xx)\mapsto q , \hbox{ for } a\in\q_\rho \hbox{ and } \xx\in R(q).
\end{equation}
Next, we define a Hilbertian inner product $\langle\ \ , \ \ \rangle$ on $\q_\rho$ such that $\langle X,Y\rangle_q\in\c_q$ (fiber of $\c$ over $q$) for $q\in\q_\rho$ and $X,Y\in(T\q_\rho)_q$. Then we show that the imaginary part $\omega$ of the Hilbertian product is a $\c$-valued $2$-form that is the curvature of the canonical connection of the tautological bundle $\xi$. In this context, the complex structure of the manifold  $\q_\rho$ comes from the embedding of $\d\simeq\q_\rho$ as an open set in the C$^*$-algebra $\a$.

If the C$^*$-algebra has a trace $\tau:\a\to\mathbb{C}$, the Hilbertian product produces a complex inner product $\tau\langle X, Y\rangle_q\in\mathbb{C}$ for $X,Y\in(T\q_\rho)_q$, $q\in\q_\rho$, and the $2$-form $\tau\omega_q(X,Y)$ turns out to be exact.

We also define a {\it non-commutative moment map} in the following sense. For $\tilde{a}$ in the Lie algebra $\mathfrak{U}(\theta)$ of the group $\u(\theta)$, and $q\in\q_\rho$, we define $f_{\tilde{a}}(q)\in\c_q$. This gives a map
$$
f:\q_\rho\times\mathfrak{U}(\theta)\to \c.
$$
We  call this map the moment map of $\q_\rho$. Again, in the presence of a trace $\tau$, the  map 
$$
\tau f:\q_\rho\times\mathfrak{U}(\theta)\to \mathbb{C}
 $$
defines a moment map $\q_\rho\to\mathfrak{U}(\theta)^*$ in the classical sense.

Section 2 contains a description of the set $\q_\rho$.  We also introduce the form $\theta:\a^2\times \a^2\to \a$ and its unitary group $\u(\theta)$, which turns out to be a  group of linear  fractional transformations, or the generalized symplectic group (see \cite{siegel}, \cite{krein}, \cite{potapov}, \cite{smuljan}, \cite{harpe}, \cite{young}).  We also introduce a  principal $\u_\a$-bundle ${\bf pr}:\k\to\q_\rho$, where   $\k$ is  contained in the {\it unit sphere} of $\theta$, and $\u_\a$ is the unitary group of $\a$. This principal bundle is a basic tool in the main constructions of this paper.

In Section 3 we describe three vector bundles related to $\q_\rho$ with their corresponding canonical connections. The so called  {\it coefficient bundle} $\c$ plays an essential role in this paper. In particular, we show that the tangent bundle $T\q_\rho$ is a $\c$-module. 

In Section 4 we introduce a complex structure in $\q_\rho$, which is compatible with the Hilbertian product in $\q_\rho$ defined in Section 5, where we introduce what we call the {\it symplectic form} $\omega$ of $\q_\rho$. 

The main result of Section 6 is a kind of pre-quantization of $\q_\rho$ \cite{woodhouse}: the curvature of the tautological bundle $\xi$  is, essentially, the symplectic form $\omega$. The moment map of $\q_\rho$ is studied in Section 7 and a $\u(\theta)$-invariant Finsler metric is studied in Section 8. 

The cases where $\a$ is $\mathbb{C}$ or a commutative $C^*$-algebra are considered in Section 9, where it is shown that these constructions coincide with those of the classical  Poincar\'e disk. Section 10 contains a kind of Atiyah-Guillemin-Sternberg's result, concerning the convexity of the restricted moment map in presence of a {\it valuation}, which is a map $\nu$ from the coefficient bundle $\c$ to a commutative C$^*$-algebra $\f$, with a tracial property. 

In Section 11  we do the following construction. First,  we identify the disk $\d$ with the Poincar\'e half-space $\h$ (see \cite{tejemas}) of $\a$, which is then also naturally  identified with $TG^+$. Then, in the presence of a trace in $\a$, we explicitly compute in $\h$ the symplectic $2$-form  $\omega$. Finally,  we construct an invariant Liouville $1$-form $\alpha$ on $TG^+$, and we show that $\omega=d\alpha$, as in the classical cotangent construction.

\section{The space $\q_\rho$}
We start with a review of polar decompositions of invertible elements  of a C$^*$-algebra. 
Every invertible element $g$ of a unital C$^*$-algebra factorizes in a unique way as $g=\lambda^2 \rho$, with $\lambda$ positive and $\rho$ unitary. In fact, $gg^*=\lambda^4$ and then $\rho=(gg^*)^{-1/2}g$. Analogously, $g=\rho\mu^2$ is the unique factorization with the (same) unitary on the left and the positive on the right: the formula $(gg^*)^{-1/2}=(g^*g)^{-1/2}g$ shows that the unitary part $\rho$ in both factorizations coincide. The positive parts $\lambda^2, \mu^2$ are related as $\lambda^2=(gg^*)^{1/2}=|g^*|$ and $\mu^2=(g^*g)^{1/2}=|g|$. In this paper we only need these remarks for the case when $g$ is a reflection, i.e., $g=g^{-1}$. In this case, it is straightforward to prove that $\rho=\rho^*=\rho^{-1}$ (a symmetry) and $\lambda\rho=\rho\lambda^{-1}$ (see \cite{cpr}, Section 3 for details). Therefore, if $g^{2}=1$ then $|g^*|=|g|^{-1}$ and $\rho^2=1$.

Notice also that there is a  natural correspondence between idempotents and reflections: $q^2=q$ if and only if $(2q-1)^2=1$. Thus, for every idempotent $q$ it holds that
$$
2q-1=\rho|2q-1|=|2q-1|^{-1}\rho=|2q-1|^{-1/2}\rho|2q-1|^{1/2}
$$
and $q=|2q-1|^{-1/2}p|2q-1|^{1/2}$, if $p=\frac12(1+\rho)$. 
\begin{nota}
$\lambda_q=|2q-1|^{-1/2}$, for every idempotent $q$.
\end{nota}
Thus, 
$2q-1=\lambda^2_q\rho=\rho\lambda_q^{-2}=\lambda_q\rho\lambda_q^{-1}$ and $q=\lambda_qp\lambda_q^{-1}$.

Let us apply these calculations in the context of the C$^*$-algebra $M_2(\a)$, with the fixed Hermitian reflection $\rho=\left(\begin{array}{cc} 1 & 0 \\ 0 & -1 \end{array}\right)$. Note that 
$p=\left(\begin{array}{cc} 1 & 0 \\ 0 & 0 \end{array}\right)$.

The sesquilinear $\a$-valued form $\theta:\a^2\times\a^2\to\a$ induced by $\rho$,
$$
\theta(\xx,\yy)=\langle \rho\xx,\yy\rangle=\xx^*\rho\yy=x_1^*y_1-x_2^*y_2
$$
is symmetric and non-degenerate because of the properties of $\rho$.

Every matrix $\tilde{a}\in M_2(\a)$ admits a unique adjoint with respect to the form $\theta$, namely $\tilde{a}^\sharp=\rho\tilde{a}^*\rho$. The group $\u(\theta)$ of invertible matrices which are unitary with respect to $\theta$ will play a central role in this paper:
$$
\u(\theta)=\{\tilde{a}\in Gl_2(\a): \tilde{a}^\sharp=\tilde{a}^{-1}\}.
$$
The group  $\u(\theta)$ has appeared frequently in the literature: see the papers by Siegel \cite{siegel}, Krein-Smuljan \cite{krein}, Potapov \cite{potapov}, Phillips \cite{phillips}, de la Harpe \cite{harpe}, Young \cite{young}, among many others. Its elements are also called generalized symplectic or linear fractional transformations. 

Among all the idempotent matrices $q\in M_2(\a)$, we shall consider those which satisfy
\begin{enumerate}
\item
$q^\sharp=q$, i.e. $\rho q=q\rho$.
\item
$\theta$ is positive semidefinite in the image $R(q)$ of $q$, and is negative semidefinite in the nullspace $N(q)$.
\end{enumerate}

The set of all these idempotents is denoted $\q_\rho$. We shall resume condition $2.$ above saying that $q$ {\it decomposes} $\theta$.

We characterize the elements of $\q_\rho$ as follows.
\begin{prop}
Let $q\in M_2(\a)$ be an idempotent matrix. The following conditions are equivalent:
\begin{enumerate}
\item
$q$ decomposes $\theta$.
\item
$\rho(2q-1)$ is positive.
\item
There exists a positive invertible matrix $\lambda\in M_2(\a)$, such that
$\lambda\rho=\rho\lambda^{-1}$ and $q=\lambda p\lambda^{-1}$.
\end{enumerate}
\end{prop}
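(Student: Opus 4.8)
The three conditions are linked through the two reflections $g=2q-1$ and $\rho=2p-1$ (each squaring to $1$), and the engine throughout is the polar decomposition of a reflection recalled at the start of the section, together with uniqueness of positive square roots. I would first observe that each of the three conditions entails the self-adjointness $q^\sharp=q$ (equivalently $\rho q=q\rho$): for $(2)$ this is forced because a positive element is self-adjoint, and $\rho(2q-1)=(\rho(2q-1))^*$ reduces to $\rho q=q^*\rho$; for $(3)$ it follows from $\rho p\rho=p$ together with the relation $\lambda\rho=\rho\lambda^{-1}$; and for $(1)$ it is the $\theta$-orthogonality of $R(q)$ and $N(q)$ which is part of $q$ decomposing $\theta$. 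I would then run the cycle $(2)\Rightarrow(3)\Rightarrow(1)\Rightarrow(2)$.

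For $(2)\Rightarrow(3)$, set $\lambda=(\rho(2q-1))^{-1/2}$, the positive invertible square root of the positive invertible element $(\rho(2q-1))^{-1}$, so that $\lambda^{-2}=\rho(2q-1)$. The crux is to extract the commutation relation $\lambda\rho=\rho\lambda^{-1}$: since $\rho$ is a self-adjoint unitary, $\rho\lambda\rho$ is positive, and a short computation using $(2q-1)^{-1}=2q-1$ gives $(\rho\lambda\rho)^2=\rho\lambda^2\rho=\lambda^{-2}=(\lambda^{-1})^2$, so uniqueness of the positive square root yields $\rho\lambda\rho=\lambda^{-1}$. From this one reads off $\lambda\rho\lambda^{-1}=\lambda^2\rho=2q-1$, hence $q=\lambda p\lambda^{-1}$. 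The same manipulation run backwards gives $(3)\Rightarrow(2)$ immediately, so in fact $(2)\Leftrightarrow(3)$: from $\lambda\rho=\rho\lambda^{-1}$ one finds $\rho(2q-1)=\rho\lambda\rho\lambda^{-1}=\lambda^{-2}$, manifestly positive and invertible.

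For $(3)\Rightarrow(1)$ I would use $R(q)=\lambda R(p)$, $N(q)=\lambda N(p)$ and the identity $\lambda\rho\lambda=\rho$ (immediate from $\lambda\rho=\rho\lambda^{-1}$): for $\xx=\lambda\yy$ with $\yy\in R(p)$ one gets $\theta(\xx,\xx)=\yy^*\lambda\rho\lambda\yy=\yy^*\rho\yy=y_1^*y_1\ge 0$, symmetrically $\theta(\xx,\xx)=-y_2^*y_2\le 0$ on $N(q)$, and the mixed terms vanish. I expect the remaining implication $(1)\Rightarrow(2)$ to be the main obstacle. The natural route is to test positivity of the self-adjoint element $\rho(2q-1)$ against an arbitrary $\zz\in\a^2$, invoking the standard fact that $b\in M_2(\a)$ is positive as soon as $\zz^*b\zz\ge 0$ for every $\zz$ in the Hilbert module $\a^2$. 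Writing $\zz=\xx+\ww$ with $\xx=q\zz\in R(q)$, $\ww=(1-q)\zz\in N(q)$, and using $(2q-1)\zz=\xx-\ww$, one obtains
$$
\zz^*\rho(2q-1)\zz=\theta(\xx,\xx)-\theta(\ww,\ww)-\theta(\xx,\ww)+\theta(\ww,\xx).
$$
The delicate point is exactly the cross terms $\theta(\xx,\ww)$ and $\theta(\ww,\xx)$: they vanish precisely because $q$ decomposing $\theta$ carries the $\theta$-orthogonality of $R(q)$ and $N(q)$ (equivalently $q^\sharp=q$), and without this orthogonality the implication genuinely fails, so this is where the real content sits. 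Once the cross terms are eliminated, $\theta(\xx,\xx)\ge 0$ and $-\theta(\ww,\ww)\ge 0$ force $\zz^*\rho(2q-1)\zz\ge 0$ for all $\zz$, and since $\rho(2q-1)$ is invertible it is positive and invertible, closing the cycle.
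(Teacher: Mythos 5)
Your proof is correct, but note that the paper itself contains no argument to compare against: it delegates the proof of this proposition to \cite{cpr}, Section 3. Judged on its own, your cycle $(2)\Rightarrow(3)\Rightarrow(1)\Rightarrow(2)$ is sound, and it is built from precisely the toolkit the paper recalls at the start of Section 2 (polar decomposition of the reflection $2q-1$, uniqueness of positive square roots, the relation $\lambda\rho=\rho\lambda^{-1}$). In fact, once $\rho(2q-1)$ is known to be positive one has $(\rho(2q-1))^2=(2q-1)^*(2q-1)$, so your $\lambda=(\rho(2q-1))^{-1/2}$ coincides with the paper's $\lambda_q=|2q-1|^{-1/2}$, in agreement with the remark after the proposition that the $\lambda$ of condition $(3)$ is the positive part of the polar decomposition $2q-1=\lambda_q^2\rho$. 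The step $(1)\Rightarrow(2)$ by testing $\zz^*\rho(2q-1)\zz$ against all $\zz\in\a^2$ is legitimate: $M_2(\a)$ is the algebra of adjointable operators on the Hilbert C$^*$-module $\a^2$, and positivity of an adjointable operator is detected exactly this way (\cite{lance}, Lemma 4.1 --- already in the paper's bibliography); self-adjointness of $\rho(2q-1)$ also comes for free from your preliminary observation that each condition forces $q^\sharp=q$.

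One point where you actually sharpen the statement: the paper's wording defines ``$q$ decomposes $\theta$'' as the semidefiniteness condition alone, listing $q^\sharp=q$ as a separate requirement, and you correctly flag that with that literal reading $(1)\Rightarrow(2)$ fails. This is not pedantry: already in $M_2(\mathbb{C})$ the idempotent $q=\left(\begin{array}{cc} 1 & -t \\ 0 & 0\end{array}\right)$ with $0<|t|\le 1$ has $\theta\ge 0$ on $R(q)$ and $\theta\le 0$ on $N(q)$, yet $\rho(2q-1)=\left(\begin{array}{cc} 1 & -2t \\ 0 & 1 \end{array}\right)$ is not self-adjoint, so $(2)$ and $(3)$ fail. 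Your reading of $(1)$ as including the $\theta$-orthogonality of $R(q)$ and $N(q)$ (equivalently $q^\sharp=q$) is the intended one, consistent with the Remark following the proposition and with \cite{cpr}, and your identification of the cross terms $\theta(\xx,\ww)$, $\theta(\ww,\xx)$ as the locus of the real content in $(1)\Rightarrow(2)$ is exactly right.
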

For the proof, the reader is  referred to \cite{cpr}, Section 3. The positive matrix $\lambda$ of condition 3. is precisely the one appearing in the polar decomposition $2q-1=\lambda_q^2\rho$

\begin{rem}
Every idempotent $q\in M_2(\a)$ decomposes $\a^2$ as $\a^2=R(q)\oplus N(q)$. If $q^\sharp=q$, this decomposition is orthogonal with respect to $\theta$. If, additionally, $q\in\q_\rho$,    then 
$\theta$ is positive semidefinite in $R(q)$ and negative semidefinite in $N(q)$.
\end{rem}
 
Next, we shall characterize the positive part $\lambda_q$ of $2q-1$ for any $q\in\q_\rho$. Write $\lambda_q=\left(\begin{array}{cc} a & b^* \\ b & c \end{array}\right)$, with $a,b,c\in\a$; note that $a,c\ge 0$. If $q\in\q_\rho$, then  $\lambda_q\rho\lambda_q=\rho$, so
$$
\lambda_q \rho \lambda_q= \left(\begin{array}{cc} a^2-b^*b & -ab^*+b^*c \\ -ba+cb & -c^2+bb^* \end{array}\right) =\left(\begin{array}{cc} 1 & 0 \\ 0 & -1 \end{array}\right)
 $$
and we get  $a=(1+b^*b)^{1/2}$, $c=(1+bb^*)^{1/2}$.
Then
$$
\lambda_q=\left(\begin{array}{cc} (1+b^*b)^{1/2} & b^* \\ b & (1+bb^*)^{1/2} \end{array}\right) \ , \ \ \lambda_q^{-1}=\left(\begin{array}{cc} (1+b^*b)^{1/2} & -b^* \\ -b & (1+bb^*)^{1/2} \end{array}\right)
$$
and 
$$
q=\lambda_qp\lambda_q^{-1}=\left(\begin{array}{cc} 1+b^*b & -(1+b^*b)^{1/2}b^* \\ b(1+b^*b)^{1/2} & -bb^* \end{array}\right).
$$
Thus, the map
$$
b\mapsto \left(\begin{array}{cc} 1+b^*b & -(1+b^*b)^{1/2}b^* \\ b(1+b^*b)^{1/2} & -bb^* \end{array}\right)
$$
from $\a$ onto $\q_\rho$ is a parametrization, and the map $b\mapsto \left(\begin{array}{cc} (1+b^*b)^{1/2} & b^* \\ b & (1+bb^*)^{1/2} \end{array}\right)$ is a parametrizes the corresponding set $\{\lambda_q: q\in\q_\rho\}$.

Notice that for any $q\in\q_\rho$ it holds that $\lambda_q\in \u(\theta)\cap Gl_2(\a)^+$, where $Gl_2(\a)^+$ denotes the set of positive elements in $Gl_2(\a)$. Indeed, it is easy to see that $\u(\theta)\cap Gl_2(\a)^+=\{\lambda_q: q\in\q_\rho\}$.

\begin{rem}
Both columns of $q$ (in $\q_\rho$) are multiples of $\xx=\left(\begin{array}{c} (1+b^*b)^{1/2}\\b\end{array}\right)$, so $R(q)=[\xx]=\{\xx a: a\in\a\}$ and $\{\xx\}$ is a basis of $R(q)$, since $x_1=(1+b^*b)^{1/2}$ is invertible.
\end{rem}

The other important set in this study is the following subset of the  unit sphere of $\theta$
$$
\k=\k_\theta=\{\xx\in\a^2: \theta(\xx,\xx)=1, x_1 \hbox{ invertible}\}=\{\xx\in\a^2:\xx^*\rho\xx=1 , x_1 \hbox{ invertible}\}.
$$

$\k$ is a submanifold of $\a^2$, and for $\xx\in\k$ it holds that 
\begin{equation}\label{tangente esfera theta}
(T\k)_{\xx}=\{ \ZZ\in\a^2: \ZZ^*\rho \xx+\xx^* \rho\ZZ=0\}=\{\zz\in\a^2: Re\left(\theta(\xx,\zz)\right)=0\}.
\end{equation}

For each $\xx\in\k$, define
$$
p_\xx=\xx\xx^* \rho= \left(\begin{array}{cc} x_1x_1^* & -x_1x_2* \\ x_2x_1^* & -x_2x_2^* \end{array}\right)\in M_2(\a).
$$
Notice that $p_\xx(\zz)=\xx\ \theta(\xx,\zz)$ for all $\zz\in\a^2$.
\begin{prop}
For each $\xx\in\k$, $p_\xx\in\q_\rho$.
\end{prop}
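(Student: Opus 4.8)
The plan is to check the three defining requirements of membership in $\q_\rho$ for $p_\xx=\xx\xx^*\rho$: that $p_\xx$ is idempotent, that $p_\xx^\sharp=p_\xx$ (condition $1$, equivalently $\rho p_\xx=p_\xx\rho$), and that $p_\xx$ decomposes $\theta$ (condition $2$). The first two are immediate consequences of $\xx\in\k$; the decomposition condition is the real content, and I would obtain it from the characterization Proposition above.

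For idempotency I would use $p_\xx^2=\xx(\xx^*\rho\xx)\xx^*\rho=\xx\,\theta(\xx,\xx)\,\xx^*\rho$ together with the hypothesis $\theta(\xx,\xx)=\xx^*\rho\xx=1$, which gives $p_\xx^2=\xx\xx^*\rho=p_\xx$. For $\sharp$-selfadjointness, since $\rho^*=\rho=\rho^{-1}$ one has $p_\xx^*=(\xx\xx^*\rho)^*=\rho\,\xx\xx^*$, hence $p_\xx^\sharp=\rho\,p_\xx^*\,\rho=\rho(\rho\,\xx\xx^*)\rho=\xx\xx^*\rho=p_\xx$.

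For the decomposition of $\theta$, I would verify via the characterization Proposition that $\rho(2p_\xx-1)$ is positive. In $2\times2$ block form this is the self-adjoint matrix
$$
\rho(2p_\xx-1)=\left(\begin{array}{cc} 2x_1x_1^*-1 & -2x_1x_2^* \\ -2x_2x_1^* & 2x_2x_2^*+1\end{array}\right),
$$
whose lower-right corner $2x_2x_2^*+1$ is positive and invertible. I would then apply the Schur complement criterion with respect to this corner. Using the sphere constraint $x_1^*x_1-x_2^*x_2=1$ (i.e. $x_1^*x_1=1+x_2^*x_2$) together with the standard intertwining relations $f(x_ix_i^*)x_i=x_if(x_i^*x_i)$ to transport the relevant functional calculus past $x_1$ and $x_2$, the Schur complement collapses to $(2x_1x_1^*-1)^{-1}$, which is positive because $x_1$ invertible forces $x_1x_1^*\ge1$ and hence $2x_1x_1^*-1\ge1$. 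This yields positivity of $\rho(2p_\xx-1)$ and therefore condition $2$.

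As an alternative to the Schur complement I could check condition $2$ directly: since $x_1$ is invertible, $\theta(\xx,\cdot)$ is surjective and left multiplication by $\xx$ is injective, so $R(p_\xx)=[\xx]$ and $N(p_\xx)=\{\zz:\theta(\xx,\zz)=0\}$; then $\theta(\xx a,\xx a)=a^*a\ge0$ on the range, while on the nullspace the relation $x_1^*z_1=x_2^*z_2$ together with the constraint gives $\theta(\zz,\zz)=-z_2^*(1+x_2x_2^*)^{-1}z_2\le0$. The main obstacle is precisely this positivity in condition $2$; the algebra hinges on systematically using $x_1^*x_1=1+x_2^*x_2$ and the C$^*$ intertwining identities, while everything else is formal.
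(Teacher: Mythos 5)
Your proposal is correct, and its primary route is genuinely different from the paper's. The first two steps (idempotency and $p_\xx^\sharp=p_\xx$) coincide with the paper's; the divergence is in condition 2. The paper never touches $\rho(2p_\xx-1)$: it verifies the decomposition directly, showing $\theta(p_\xx\yy,p_\xx\yy)=\yy^*\rho\xx\xx^*\rho\yy\ge0$ on the range and, on the nullspace, solving $z_1=(x_1^*)^{-1}x_2^*z_2$ to reduce everything to $x_2^*x_2<x_1^*x_1$, i.e.\ to the sphere constraint --- this is exactly your ``alternative'' paragraph, and your closed form $\theta(\zz,\zz)=-z_2^*(1+x_2x_2^*)^{-1}z_2$ is a correct, slightly sharper rewriting of the paper's $z_2^*(aa^*-1)z_2$ with $a=x_2x_1^{-1}$. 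Your main argument instead invokes the characterization proposition ($q$ decomposes $\theta$ iff $\rho(2q-1)$ is positive) plus the C$^*$ Schur-complement criterion, and the computation checks out: with $u=x_1x_1^*$, the intertwinings $x_2^*f(x_2x_2^*)=f(x_2^*x_2)x_2^*$ and $x_1f(x_1^*x_1)=f(x_1x_1^*)x_1$, together with $2x_2^*x_2+1=2x_1^*x_1-1$, collapse the complement to $2u-1-4(2u-1)^{-1}(u-1)u=(2u-1)^{-1}$. This buys a little more than the paper states: $\rho(2p_\xx-1)$ comes out positive \emph{and invertible}, so you recover in one stroke that $\lambda_{p_\xx}^2$ is positive invertible; the cost is importing the characterization proposition and the Schur machinery where the paper's computation is self-contained and more elementary. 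One wording slip to repair: ``$x_1$ invertible forces $x_1x_1^*\ge1$'' is false as a standalone claim (invertibility alone bounds nothing from below). What is true is that the constraint gives $x_1^*x_1=1+x_2^*x_2\ge1$, and then invertibility transfers this to $x_1x_1^*\ge1$, e.g.\ via $x_1x_1^*=x_1(x_1^*x_1)x_1^{-1}$, so that $\sigma(x_1x_1^*)=\sigma(x_1^*x_1)\subseteq[1,\infty)$. You should also state this inequality \emph{before} the Schur computation, since the invertibility of $2x_1x_1^*-1$ is needed to make sense of $(2u-1)^{-1}$ in the first place.
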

\begin{proof}
Since $\xx^*\rho\xx=1$ for $\xx\in\k$, it is easy to check that $p_\xx$ is an idempotent. Also $p_\xx^\sharp=\rho p_\xx^*\rho=\rho \rho\xx\xx^*\rho=p_\xx$. It is clear that $\theta$ is positive semidefinite on $R(p_\xx)$: 
$$
\theta(p_\xx\yy,p_\xx\yy)=\yy^*\rho\xx\xx^*\rho\xx\xx^*\rho\yy=\yy^*\rho\xx\xx^*\rho\yy\ge 0.
$$
If $\zz\in N(p_\xx)$, then $\xx\xx^*\rho\zz=0$ and , since $x_1$ is invertible, $\xx^*\rho\zz=0$, i.e., $x_1^*z_1=x_2^*z_2$, or $z_1=(x_1^*)^{-1}x_2^*z_2$ Thus,
$$
\theta(\zz,\zz)=z_2^*x_2x_1^{-1}(x_1^*)^{-1}x_2^*z_2-z_2^*z_2=z_2^*(x_2x_1^{-1}(x_1^*)^{-1}x_2^*-1)z_2=z_2^*(aa^*-1)z_2,
$$
where $a=x_2x_1^{-1}$. It suffices to show that $aa^*< 1$, or equivalently, that $a^*a< 1$, i.e., $x_2^*x_2< x_1^*x_1$. This last inequality holds because $x_1^*x_1=1+x_2^*x_2$.
\end{proof}
\begin{rem}
If $\xx\in\k$ then $\{\xx\}$ is a basis for $R(p_\xx)$.
\end{rem}

Define the map ${\bf pr}:\k\to\q_\rho$, ${\bf pr}(\xx)=p_\xx$.

\begin{prop}
${\bf pr}(\k)=\q_\rho$.
\end{prop}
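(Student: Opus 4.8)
The inclusion ${\bf pr}(\k)\subseteq\q_\rho$ is exactly the content of the preceding proposition, so the plan is to establish the reverse inclusion $\q_\rho\subseteq{\bf pr}(\k)$; that is, given an arbitrary $q\in\q_\rho$, to exhibit some $\xx\in\k$ with $p_\xx=q$. The candidate is dictated by the parametrization computed above. Every $q\in\q_\rho$ has the explicit form
$$
q=\left(\begin{array}{cc} 1+b^*b & -(1+b^*b)^{1/2}b^* \\ b(1+b^*b)^{1/2} & -bb^* \end{array}\right)
$$
for a (unique) $b\in\a$, and the Remark following that parametrization already singles out $\xx=\left(\begin{array}{c}(1+b^*b)^{1/2}\\ b\end{array}\right)$ as a generator of $R(q)$. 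I would take this very $\xx$ as the preimage.

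First I would check that $\xx\in\k$. Writing $x_1=(1+b^*b)^{1/2}$ and $x_2=b$, the element $x_1$ is positive and invertible, and a one-line computation gives $\theta(\xx,\xx)=\xx^*\rho\xx=x_1^*x_1-x_2^*x_2=(1+b^*b)-b^*b=1$; hence $\xx$ lies on the unit sphere of $\theta$ with invertible first coordinate, i.e. $\xx\in\k$. Second, I would compute $p_\xx=\xx\xx^*\rho$ directly. Using $x_1^*=x_1$, the outer product $\xx\xx^*$ has entries $x_1x_1^*=1+b^*b$, $x_1x_2^*=(1+b^*b)^{1/2}b^*$, $x_2x_1^*=b(1+b^*b)^{1/2}$ and $x_2x_2^*=bb^*$; multiplying on the right by $\rho$ flips the sign of the second column, and the result is precisely the displayed matrix for $q$. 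Thus $q=p_\xx={\bf pr}(\xx)\in{\bf pr}(\k)$, which gives $\q_\rho\subseteq{\bf pr}(\k)$ and, together with the previous proposition, the equality ${\bf pr}(\k)=\q_\rho$.

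There is essentially no obstacle here: most of the work was already done in setting up the parametrization of $\q_\rho$ and the map ${\bf pr}$. The only point requiring (trivial) care is checking that the generator $\xx$ of $R(q)$ supplied by the parametrization is correctly normalized so that $\theta(\xx,\xx)=1$, rather than merely $\theta(\xx,\xx)>0$; this is guaranteed by the identity $x_1^*x_1=1+x_2^*x_2$ built into the parametrization. One could alternatively argue more abstractly: for any $q\in\q_\rho$ the range $R(q)=[\yy]$ is generated by some $\yy$ with $\theta(\yy,\yy)$ positive and invertible, and then $\xx=\yy\,\theta(\yy,\yy)^{-1/2}\in\k$ satisfies $[\xx]=[\yy]=R(q)$; since $p_\xx$ and $q$ both lie in $\q_\rho$ and each such idempotent has kernel equal to the $\theta$-orthogonal complement of its range, they share range and kernel and hence coincide. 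But the explicit parametrization makes the direct matrix verification the cleanest route.
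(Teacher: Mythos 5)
Your proposal is correct and follows exactly the paper's own argument: take the generator $\xx=\left(\begin{array}{c}(1+b^*b)^{1/2}\\ b\end{array}\right)$ supplied by the explicit parametrization of $\q_\rho$, check $\theta(\xx,\xx)=1$ with $x_1$ invertible, and verify $p_\xx=\xx\xx^*\rho=q$ by direct computation. You merely spell out the one-line verifications that the paper leaves implicit, so there is nothing further to compare.
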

\begin{proof}
For $q\in\q_\rho$ it holds that $q=\left(\begin{array}{cc} 1+b^*b & -(1+b^*b)^{1/2}b^* \\ b(1+b^*b)^{1/2} & -bb^* \end{array}\right)$. Then
$$
\xx=\left(\begin{array}{c} (1+b^*b)^{1/2}\\ b \end{array}\right)\in \k
$$
satisfies $p_\xx=\xx\xx^*\rho=q$.
\end{proof}
\begin{prop}
Let $\xx,\yy\in\k$. Then $p_\xx=p_\yy$ if and only if $\yy=\xx u$ for some unitary $u\in\a$.
\end{prop}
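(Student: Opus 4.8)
The plan is to prove the two implications separately: the ``if'' direction by direct substitution, and the ``only if'' direction by extracting the intertwining element $u$ as a value of the form $\theta$.

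For the ``if'' direction, suppose $\yy=\xx u$ with $u\in\a$ unitary. Then I would simply compute $p_\yy=\yy\yy^*\rho=\xx u u^*\xx^*\rho=\xx\xx^*\rho=p_\xx$, using $uu^*=1$. This uses nothing beyond the definition of $p_\xx$. For the converse, assume $p_\xx=p_\yy$ and set $u=\theta(\xx,\yy)=\xx^*\rho\yy\in\a$. The idea is that applying the two (equal) idempotents to the base vectors recovers each vector as an $\a$-multiple of the other. Since $\xx,\yy\in\k$ we have $\theta(\xx,\xx)=\theta(\yy,\yy)=1$, so using the identity $p_\yy(\zz)=\yy\,\theta(\yy,\zz)$ one gets $p_\yy\yy=\yy\,\theta(\yy,\yy)=\yy$; because $p_\xx=p_\yy$ this equals $p_\xx\yy=\xx\,\theta(\xx,\yy)=\xx u$, whence $\yy=\xx u$. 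The symmetric computation exchanging the roles of $\xx$ and $\yy$, together with $\theta(\yy,\xx)=\theta(\xx,\yy)^*=u^*$ (the form is Hermitian-symmetric), gives $\xx=\yy u^*$.

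Finally I would combine the two relations. Substituting one into the other yields $\yy=\xx u=\yy\,u^*u$ and $\xx=\yy u^*=\xx\,u u^*$. Since $x_1$ and $y_1$ are invertible — which is precisely the defining condition of $\k$, and is what makes $\{\xx\}$ and $\{\yy\}$ bases of their respective ranges — left multiplication by $\xx$ or $\yy$ is cancellable (comparing first coordinates), so $u^*u=1$ and $uu^*=1$, and hence $u$ is unitary.

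The step I expect to require the most care is this last one. From $p_\xx=p_\yy$ one obtains at first only that $\yy=\xx u$ for some element $u$, and in a C$^*$-algebra an isometry (an element with $u^*u=1$) need not be unitary. The crucial point is that the situation is symmetric in $\xx$ and $\yy$, so one simultaneously extracts $u^*u=1$ and $uu^*=1$; for this, the invertibility of \emph{both} $x_1$ and $y_1$ is essential, and it is exactly here that the full strength of the membership $\xx,\yy\in\k$ is used.
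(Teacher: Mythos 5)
Your proof is correct and follows the paper's argument in its essentials: the same direct computation for the ``if'' direction, the same candidate $u=\theta(\xx,\yy)=\xx^*\rho\yy$ for the converse, and the same key identities $p_\xx\yy=\yy$ and $p_\yy\xx=\xx$, which come from $\theta(\xx,\xx)=\theta(\yy,\yy)=1$ together with $p_\xx=p_\yy$. The only divergence is the final step. You derive $\yy=\xx u$ and $\xx=\yy u^*$, substitute one relation into the other, and cancel by comparing first coordinates, using the invertibility of $x_1$ and $y_1$. The paper instead computes the two products directly: $uu^*=\xx^*\rho\yy\yy^*\rho\xx=\xx^*\rho\,p_\yy\xx=\xx^*\rho\xx=1$ and $u^*u=\yy^*\rho\,p_\xx\yy=\yy^*\rho\yy=1$. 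Both routes are valid, but your closing commentary — that the invertibility of both $x_1$ and $y_1$ is \emph{essential} for upgrading the one-sided isometry to a unitary — is not accurate: the paper's two-line computation establishes $uu^*=u^*u=1$ using only that $\xx,\yy$ are $\theta$-unit vectors with $p_\xx=p_\yy$, so the proposition in fact holds on the whole unit sphere of $\theta$, not just on $\k$. Invertibility of the first coordinate is available by the definition of $\k$ and is what your particular cancellation consumes, but it is not what powers the result; your version trades two one-line product computations for a substitution-plus-cancellation at the cost of invoking this extra hypothesis.
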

\begin{proof}
If $\yy=\xx u$ for $u$ unitary, then $p_\yy=(\xx u)(\xx u)^*\rho=\xx uu^*\xx^*\rho=p_\xx$. 

For the converse statement, we claim that $u=\xx^*\rho\yy$ is a unitary element of $\a$ if $p_\xx=p_\yy$. Notice that in this case $p_\xx\yy=\yy$ and $p_\yy\xx=\xx$. Then
$$
uu^*=\xx^*\rho\yy\yy^*\rho\xx=\xx^*\rho p_\yy\xx=\xx^*\rho\xx=1,
$$
and
$$
u^*u=\yy^*\rho\xx\xx^*\rho\yy=\yy^*\rho p_\xx\yy=\yy^*\rho\yy=1.
$$
Thus $\yy=p_\xx\yy=\xx\xx^*\rho\yy=\xx u$.
\end{proof}

We need some facts concerning the tangent spaces of $\q_\rho$. Let us recall the differential structure of this space. If $\q$ denotes the set of idempotents of $M_2(\a)$ and $\p$ the set of all Hermitian idempotents, the polar decomposition defines a map $\Pi: \q \to \p$ as follows: if $q\in\q$ and $2q-1=\lambda_q^2\rho$ as before, then $\rho=\rho^{-1}=\rho^*$ and $p=\frac12(1+p)\in\p$; define $\Pi(q)=p$. Then
$$
\q_\rho=\Pi^{-1}(\{p\}).
$$
This fiber is a $C^\infty$-submanifold of $\q$, which is a closed and complemented submanifold of $M_2(\a)$; we refer the reader to  \cite{cpr} for a comprehensive study of these matters. Since $\q_\rho$ can also be described as
$$
\q_\rho=\{ q\in\q: \rho q^* \rho=q \ , \ (2q-1)\rho>0\}
$$
it follows that, for $q\in\q$ it holds
$$
(T\q_\rho)_q=\{\XX\in M_2(\a): \rho\XX^*\rho=\XX \ , \ \XX q+q\XX=\XX\}.
$$  
\begin{teo}
The map ${\bf pr}:\k\to\q_\rho$ is a principal fiber bundle with structure group $\u_\a$. It has a global cross section ${\bf sr}:\q_\rho\to\k$ defined by ${\bf sr}(q)=\lambda_q \ee_1$. The map $\Phi:\k\to\q_\rho\times\u_\a$, $\Phi(\xx)=(p_\xx,\theta(\xx,{\bf sr}(p_\xx)))$ is a trivialization for ${\bf pr}$, with inverse $\Phi^{-1}(q,u)={\bf sr}(q)u^*$.
\end{teo}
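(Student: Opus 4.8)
The plan is to exhibit the right action of $\u_\a$ on $\k$, identify its orbits with the fibers of ${\bf pr}$, and then verify directly that ${\bf sr}$ and $\Phi$ have the required properties. Most of the essential content is already contained in the preceding propositions, so the argument is mainly a sequence of verifications assembled around one nontrivial input: the proposition stating that $p_\xx=p_\yy$ exactly when $\yy=\xx u$ for a unitary $u$.

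First I would set up the action: $\u_\a$ acts on $\k$ on the right by $\xx\cdot u=\xx u$. This is well defined, since $\theta(\xx u,\xx u)=u^*\xx^*\rho\xx u=u^*u=1$ and the first coordinate $x_1u$ remains invertible; it is smooth, and it is free because $\xx u=\xx$ forces $x_1u=x_1$, hence $u=1$ by invertibility of $x_1$. By the proposition characterizing equality of $p_\xx$ and $p_\yy$, two points of $\k$ lie in the same fiber of ${\bf pr}$ if and only if they differ by such a unitary; thus the orbits of this action are exactly the fibers of ${\bf pr}$, and ${\bf pr}$ is onto by the earlier surjectivity result. This reduces the statement to producing a smooth equivariant global trivialization.

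Next I would check that ${\bf sr}(q)=\lambda_q\ee_1$ is a genuine section. It lands in $\k$ because $\lambda_q\in\u(\theta)\cap Gl_2(\a)^+$ gives $\lambda_q\rho\lambda_q=\rho$, so $\theta(\lambda_q\ee_1,\lambda_q\ee_1)=\ee_1^*\lambda_q\rho\lambda_q\ee_1=\ee_1^*\rho\ee_1=1$, with first coordinate $(1+b^*b)^{1/2}$ invertible. That ${\bf pr}\circ{\bf sr}=\mathrm{id}$ is precisely the computation appearing in the proof that ${\bf pr}(\k)=\q_\rho$, where the explicit $\xx=\lambda_q\ee_1$ was shown to satisfy $p_\xx=q$. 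Smoothness of ${\bf sr}$ follows from smoothness of $q\mapsto\lambda_q$ (the positive part of $2q-1$ obtained by functional calculus), which is already encoded in the parametrization of $\q_\rho$.

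Finally I would verify that $\Phi$ and the stated inverse are mutually inverse smooth maps. For $\xx\in\k$ set $q=p_\xx$; since $p_\xx=p_{{\bf sr}(q)}=q$, the cited proposition shows $u:=\theta(\xx,{\bf sr}(q))=\xx^*\rho{\bf sr}(q)$ is unitary, so $\Phi$ maps into $\q_\rho\times\u_\a$, is smooth, and is equivariant, as $\theta(\xx v,{\bf sr}(q))=v^*\theta(\xx,{\bf sr}(q))$. For the inverse, ${\bf sr}(q)u^*\in\k$ because right multiplication by a unitary preserves $\k$, and computing the second coordinate of $\Phi({\bf sr}(q)u^*)$ gives $u{\bf sr}(q)^*\rho{\bf sr}(q)=u$, so $\Phi\circ\Phi^{-1}=\mathrm{id}$. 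Conversely, $\Phi^{-1}(\Phi(\xx))={\bf sr}(q)(\xx^*\rho{\bf sr}(q))^*={\bf sr}(q){\bf sr}(q)^*\rho\,\xx=p_{{\bf sr}(q)}\xx=p_\xx\xx=\xx$, using $p_\xx\xx=\xx$. This presents $\Phi$ as a diffeomorphism and upgrades ${\bf pr}$ to a principal $\u_\a$-bundle. The only point requiring genuine care, rather than bookkeeping, is the regularity in the Banach setting: confirming that ${\bf sr}$ is smooth and that a global equivariant trivialization indeed yields a principal bundle in the Banach-Lie category; the algebraic identities themselves are immediate from the earlier propositions.
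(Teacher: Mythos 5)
Your proposal is correct and follows essentially the same route as the paper: you verify that $\lambda_q\ee_1\in\k$ via $\lambda_q\rho\lambda_q=\rho$, that $p_{\lambda_q\ee_1}=q$, and that $\theta(\xx,{\bf sr}(p_\xx))$ is unitary, which are exactly the paper's three computations. The only difference is that you spell out the mutual-inverse and equivariance checks (e.g.\ $\Phi^{-1}(\Phi(\xx))=p_{{\bf sr}(q)}\xx=\xx$) that the paper dismisses as a ``straightforward verification,'' which is a harmless and indeed welcome addition.
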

\begin{proof}
First note that the first column of $\lambda_q$, for $q\in\q_\rho$, belongs to $\k$:
$$
(\lambda_q \ee_1)^*\rho\lambda_q \ee_1=\ee_1^*\lambda_q\rho\lambda_q \ee_1=\ee_1^*\rho\ee_1=1,
$$
because $\lambda_q\rho\lambda_q=\rho$. Also the first coordinate of $\lambda_q\ee_1$ is $(1+b^*b)^{1/2}$, which is positive and invertible.

Next, we claim that ${\bf pr}(\lambda_1\ee_1)=q$, or equivalently, $p_{\lambda_q\ee_1}=q$. Indeed, since $p\rho=p$, we get
$$
{\bf pr}(\lambda_q\ee_1)=\lambda_q\ee_1\ee_1^*\lambda_q\rho=\lambda_q p\rho\lambda_q^{-1}=\lambda_q p\lambda_q^{-1}=q.
$$
Finally, note that $\theta(\xx,\lambda_{p_\xx}\ee_1)$ is a unitary element of $\a$, because 
$$
\xx^*\rho\lambda_{p_\xx}\ee_1\ee_1^*\lambda_{p_\xx}\rho\xx
=\xx^*\rho\lambda_{p_\xx}p\rho\lambda_{p_\xx}^{-1}\xx=\xx^*\rho\lambda_{p_\xx}p\lambda_{p_\xx}^{-1}\xx=\xx^*\rho p_\xx\xx=\xx^*\rho\xx=1,
$$
and similarly for the other product. The fact that $\Phi$ is a trivialization for ${\bf pr}$ with the given inverse is a straightforward verification.
\end{proof}
\begin{rem}
If $\xx\in\k$, then the square root  $\lambda_{p_\xx}$ of the positive part  of $2p_\xx-1$ is
$$
\lambda_{p_\xx}=\left(\begin{array}{cc} (1+x_1x_2^*x_2x_1^*)^{1/2} & x_1x_2^* \\
x_2x_1^* & (1+x_2x_1^*x_1x_2^*)^{1/2} \end{array}\right);
$$
and one has $\lambda_{p_\xx}p\lambda_{p_\xx}^{-1}=p_\xx$.
\end{rem}
We remarked before that the range of $q=p_\xx\in\q_\rho$ has basis $\{\xx\}$ ($\xx\in\k$), as right $\a$-submodule of $\a^2$. We complete this observation by showing that also $N(q)$ has a basis, and that together with $\xx$ they form a $\theta$-orthonormal basis of $\a^2$.
\begin{prop}
Let $\xx\in\k$. There exist a unique elements $\yy\in R(p_\xx)$ and $\zz\in N(p_\xx)$,
$\yy,\zz\in\k$, with $y_1,z_1$ positive and invertible.
\end{prop}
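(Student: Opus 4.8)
The plan is to realize both vectors as the two columns of the positive square root $\lambda_{p_\xx}$. Since $\lambda_{p_\xx}\in\u(\theta)\cap Gl_2(\a)^+$ and $\lambda_{p_\xx}p\lambda_{p_\xx}^{-1}=p_\xx$, conjugation by $\lambda_{p_\xx}$ carries the standard frame $\{\ee_1,\ee_2\}$, which is $\theta$-orthonormal with $\theta(\ee_1,\ee_1)=1$ and $\theta(\ee_2,\ee_2)=-1$, to a $\theta$-orthonormal frame adapted to the splitting $\a^2=R(p_\xx)\oplus N(p_\xx)$: indeed $\lambda_{p_\xx}\ee_1$ generates $R(p_\xx)$ and $\lambda_{p_\xx}\ee_2$ generates $N(p_\xx)$, because $p\ee_1=\ee_1$ and $p\ee_2=0$. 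I therefore set $\yy=\lambda_{p_\xx}\ee_1={\bf sr}(p_\xx)$ and $\zz=\lambda_{p_\xx}\ee_2$, and read off from the parametrization of $\lambda_q$ obtained above (in terms of the parameter $b$ of $q=p_\xx$) that the first entry of $\yy$ is $(1+b^{*}b)^{1/2}$ and the second entry of $\zz$ is $(1+bb^{*})^{1/2}$, both positive and invertible. Existence is thus immediate; the content is uniqueness together with the identification of which coordinate is the positive pivot.

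For $\yy$ I would argue as follows. Since $R(p_\xx)=[\xx]$, any candidate is $\yy=\xx a$ with $a\in\a$, and then $\theta(\yy,\yy)=a^{*}\theta(\xx,\xx)a=a^{*}a$; membership in $\k$ forces $a^{*}a=1$, so $a$ is an isometry, while $y_1=x_1a$. The decisive point is that \emph{$y_1$ invertible together with $a^{*}a=1$ forces $a$ to be unitary}: $y_1=x_1a$ invertible with $x_1$ invertible makes $a$ invertible, and an invertible isometry is unitary. Then $x_1=y_1a^{*}$ exhibits $x_1$ as positive times unitary, and the uniqueness of this factorization recalled at the start of the section pins down $y_1=(x_1x_1^{*})^{1/2}$ and $a=y_1^{-1}x_1$. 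This proves the existence and uniqueness of $\yy$ exactly as stated.

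For $\zz$ I would first describe $N(p_\xx)$ explicitly: $\zz\in N(p_\xx)$ amounts to $\theta(\xx,\zz)=0$, i.e. $x_1^{*}z_1=x_2^{*}z_2$, so $z_1=(x_1^{*})^{-1}x_2^{*}z_2$ and $\zz$ is freely parametrized by $z_2$. Repeating the computation in the proof that $p_\xx\in\q_\rho$ gives $\theta(\zz,\zz)=z_2^{*}(aa^{*}-1)z_2$ with $a=x_2x_1^{-1}$ and $aa^{*}<1$. Here is the main obstacle, and it is conceptual rather than computational: because $aa^{*}-1$ is negative and invertible, $\theta(\zz,\zz)\le 0$ on all of $N(p_\xx)$, vanishing only at $\zz=0$; hence no nonzero element of $N(p_\xx)$ lies on the sheet $\theta=1$, and $z_1=(x_1^{*})^{-1}x_2^{*}z_2$ generally fails to be invertible (for $\xx=\ee_1$ one has $z_1\equiv 0$). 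The normalization of the second frame vector must therefore be read on the opposite sheet of the unit sphere of $\theta$, namely $\theta(\zz,\zz)=-1$, with the positive invertible pivot being $z_2$. With this reading the equation becomes $z_2^{*}(1-aa^{*})z_2=1$, i.e. $(1-aa^{*})^{1/2}z_2$ is an isometry; imposing $z_2$ positive and invertible forces it to be unitary and, by the same positive-times-unitary uniqueness as above, gives $z_2=(1-aa^{*})^{-1/2}$ uniquely, with $z_1=(x_1^{*})^{-1}x_2^{*}z_2$. This is precisely $\lambda_{p_\xx}\ee_2$, and together with $\yy$ it furnishes the unique $\theta$-orthonormal basis adapted to $R(p_\xx)\oplus N(p_\xx)$ asserted by the proposition.
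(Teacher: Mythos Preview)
Your treatment of $\yy$ is correct and coincides with the paper's: the paper sets $\yy=\xx\,x_1^*(x_1x_1^*)^{-1/2}$, which is exactly your $\lambda_{p_\xx}\ee_1={\bf sr}(p_\xx)$, and proves uniqueness via the same polar-decomposition argument you give (any other candidate is $\yy u$ with $u$ unitary, and positivity of the first coordinate forces $u=1$).

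On $\zz$ you have caught a genuine defect in the \emph{statement} rather than a gap in your own reasoning. Since $\theta$ is strictly negative on $N(p_\xx)\setminus\{0\}$ --- indeed $\theta(\zz,\zz)=z_2^*(aa^*-1)z_2$ with $aa^*<1$ --- there is no $\zz\in N(p_\xx)\cap\k$, and the clause ``$z_1$ positive and invertible'' is also untenable (for $\xx=\ee_1$ every $\zz\in N(p)$ has $z_1=0$). The paper's displayed formula for $\zz$ is itself faulty: at $\xx=\ee_1$ it yields $\left(\begin{smallmatrix}1\\1\end{smallmatrix}\right)$, which lies neither in $N(p)$ nor in $\k$. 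Your reformulation --- require $\theta(\zz,\zz)=-1$ with $z_2$ positive and invertible, realized by $\zz=\lambda_{p_\xx}\ee_2$ --- is the correct reading and produces the intended $\theta$-orthonormal frame $\{\yy,\zz\}$ adapted to $R(p_\xx)\oplus N(p_\xx)$; your uniqueness argument (the same polar-decomposition trick applied to $(1-aa^*)^{1/2}z_2$) is valid. In short, the paper's intended content is what you wrote, and your proof is sound.
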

\begin{proof}
Straightforward computations show that
$$
\yy=\xx x_1^*(x_1x_1^*)^{-1/2} \ \hbox{ and } \zz=\left(\begin{array}{c} x_1x_1^*(1+x_2x_2^*)^{-1/2} \\ (1+x_2x_2^*)^{-1/2} \end{array} \right)
$$
satisfy the properties above. Let us prove that they are unique. If $\yy'\in R(p_\xx)\cap \k$ and $y_1$ is positive and invertible, then $\yy'=\yy a$ for some $a\in\a$, because $\yy$ is a basis for $[\xx]$. Note that  $1=\theta(\yy',\yy')=a^*\theta(\yy,\yy)a=a^*a$, and that $y'_1=y_1a$. Then $a$ is invertible, and therefore unitary. Then the identity $y_1'=y_1 a$ can be regarded as a polar decomposition of $y_1'$, which is already positive. By uniqueness of the polar decomposition, it follows that $a=1$.
The uniqueness of $\zz$ follows analogously.
\end{proof}

\subsection{A lifting form}\label{A lifting form}

Using the principal bundle ${\bf pr}:\k\to \q_\rho$  we introduce a {\it lifting form} $\kappa_{\xx}$, for $\xx\in\k$ and $q=p_\xx$. Given $\XX\in(T\q_\rho)_q$, $q=p_\xx\in\q_\rho$, we define 
\begin{equation}\label{levantado}
\kappa_{\xx}(\XX)=\XX\xx.
\end{equation}
This lifting $\kappa_{\xx}(\XX)$  is an element of $\a^2$, which belongs to the nullspace of $q$.  It holds that  $N(p_\xx)\subset (T\k)_\xx$. In fact, if $\zz\in N(q)$,  then 
$0=q(\zz)\xx\theta(\xx,\zz)$, and thus $\theta(\xx,\zz)=0$ because $x_1$ is invertible, i.e. $\zz\in(T\k)_\xx$.    Therefore $\kappa_\xx$ is a map $(T\q_\rho)_q\to N(q)$.

We have the following:
\begin{lem}
Let $\XX\in(T\q_\rho)_q$ and  $\xx\in\k$  with $q=p_\xx$. Then $\kappa_{\xx}:(T\q_\rho)_q\to N(q)$ satisfies
$$
(d {\bf pr})_\xx \kappa_{\xx}(\XX)=\XX.
$$
\end{lem}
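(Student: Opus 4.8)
The claim is that the lifting form $\kappa_\xx$ is a genuine section of the differential $(d\mathbf{pr})_\xx$ over the tangent space $(T\q_\rho)_q$, meaning that differentiating the projection $\mathbf{pr}(\xx)=p_\xx=\xx\xx^*\rho$ and then applying it to $\kappa_\xx(\XX)=\XX\xx$ recovers $\XX$ itself. The plan is to compute $(d\mathbf{pr})_\xx$ explicitly by differentiating the formula $p_\xx=\xx\xx^*\rho$ along a curve in $\k$, and then substitute the particular tangent vector $\kappa_\xx(\XX)=\XX\xx$ and simplify using the defining relations of $\q_\rho$ and $\k$.

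**Computing the differential.** First I would take a smooth curve $\xx(t)\in\k$ with $\xx(0)=\xx$ and $\dot\xx(0)=\zz\in(T\k)_\xx$, and differentiate $\mathbf{pr}(\xx(t))=\xx(t)\xx(t)^*\rho$ to obtain
$$
(d\mathbf{pr})_\xx(\zz)=\zz\xx^*\rho+\xx\zz^*\rho.
$$
Now I set $\zz=\kappa_\xx(\XX)=\XX\xx$, which is legitimate since the preceding text established that $\kappa_\xx(\XX)\in N(q)\subset(T\k)_\xx$. Substituting gives
$$
(d\mathbf{pr})_\xx(\XX\xx)=\XX\xx\xx^*\rho+\xx(\XX\xx)^*\rho
=\XX\, p_\xx+\xx\xx^*\XX^*\rho,
$$
using $\xx\xx^*\rho=p_\xx=q$ in the first term. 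For the second term I would rewrite $\xx^*\XX^*\rho$ using the symmetry $\rho\XX^*\rho=\XX$ from the tangent-space description $(T\q_\rho)_q=\{\XX:\rho\XX^*\rho=\XX,\ \XX q+q\XX=\XX\}$, so that $\XX^*\rho=\rho\XX$, giving $\xx\xx^*\rho\XX=q\XX$.

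**Finishing with the tangent relations.** After these substitutions the expression collapses to
$$
(d\mathbf{pr})_\xx(\XX\xx)=\XX q+q\XX,
$$
and the defining relation $\XX q+q\XX=\XX$ for elements of $(T\q_\rho)_q$ finishes the proof. The main technical point to handle carefully is the manipulation of the adjoint in the second term: I must correctly apply the Hermitian-type symmetry $\rho\XX^*\rho=\XX$ (equivalently $\XX^\sharp=\XX$), which encodes the condition $q^\sharp=q$ differentiated at the tangent level, to convert $\xx(\XX\xx)^*\rho$ into the clean form $q\XX$. The identities $\xx^*\rho\xx=1$ and $\xx\xx^*\rho=q$ from the definition of $\k$ and of $p_\xx$ are what make the two boundary terms reassemble into exactly $\XX q+q\XX$; the only genuine obstacle is bookkeeping with the $\rho$-adjoint, after which the second tangent-space relation delivers $\XX$ immediately.
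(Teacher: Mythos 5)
Your proof is correct and follows essentially the same route as the paper's: you compute $(d{\bf pr})_\xx(\zz)=\zz\xx^*\rho+\xx\zz^*\rho$, substitute $\zz=\XX\xx$, and use $\XX^*\rho=\rho\XX$ together with $\XX q+q\XX=\XX$ to collapse the expression to $\XX$. The only difference is that you spell out the justification for $\XX^*\rho=\rho\XX$ and for $\XX\xx\in(T\k)_\xx$, which the paper leaves implicit.
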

\begin{proof}
Since ${\bf pr}(\xx)=\xx\xx^*\rho$, its differential   is given by 
$$
(d{\bf pr})_\xx(\YY)=\YY\xx^*\rho+ \xx\YY^*\rho. 
$$  
Then
$$
(d{\bf pr})_{\xx}\kappa_{\xx}(\XX)=\XX\xx\xx^*\rho+\xx\xx^*\XX^*\rho=\XX q+q\XX=\XX,
$$
because $\XX^*\rho=\rho\XX$ and $q=p_\xx$. 
\end{proof}

If we lift $\XX$ at another point $\yy=\xx u$ in the fiber of $q$, we get
$$
\kappa_{\yy}(\XX)=\XX\yy=\XX\xx u.
$$
In view of this, we see that the tangent vectors $\XX$ are given by pairs $(\xx,\zz)$, with $\zz\in N(q)$, subject to the identification
$$
(\xx,\zz)\sim(\xx u,\zz u).
$$
\subsection{The equivalence between $\q_\rho$, $\d$ and $\h$}\label{subseccion de mapas}

In this short subsection, we briefly recall from \cite{tejemas} the maps identifying the Poincar\'e disk and halfspace with the space $\q_\rho$, as well as the roles of $\u(\theta)$ and $\k$ in this identification. Let us start with $\d=\{z\in\a: \|z\|<1\}$. The counterpart of the fibration ${\bf pr}:\k\to\q_\rho$ is the map
$$
\hat{\pi}:\k\to\d , \ \hat{\pi}(\xx)=x_2x_1^{-1}.
$$
For $\xx\in\k$, the bijection $p_\xx\longleftrightarrow x_2x_1^{-1}$ is a (well defined) diffeomorphism between $\q_\rho$ and $\d$. Thus one has the commutative diagram
\begin{equation}\label{diagrama general}
\xymatrix{
& \k \ar[ld]_{\hat{\pi }}\ar[rd]^{{\bf pr}} \\
\d \ar[rr]^{\simeq}
&& \q_\rho ,
}
\end{equation}
The group $\u(\theta)$ acts on the three spaces, and the maps above are equivariant with respect to these actions.

With respect to the half-space $\h=\{h\in\a: Im\ h\in G^+\}$, the pertinent quadratic form to study this space is $\theta_H(\xx,\yy)=x_1^*y_2-x_2^*y_1$, which induces the group
$$
\u(\theta_H)=\{\tilde{a}\in Gl_2(\a): \theta_H(\tilde{a}\xx,\tilde{a}\yy)=\theta_H(\xx,\yy) , \hbox{ for all } \xx,\yy\in\a^2\} 
$$
and the sphere $\k_H=\{\xx\in\a^2: x_1\in G \hbox{ and } \theta(\xx,\xx)=1\}$. The fibration corresponding to ${\bf pr}$ in this context looks formally equal: $\k_H\ni\xx\mapsto x_2x_1^{-1}\in\h$. There is an analogous diagram as (\ref{diagrama general}), where the horizontal equivariant diffeomorphism is $x_2x_1^{-1}\longleftrightarrow p_\xx$.

Both settings are related by the unitary matrix
$$
U=\frac{1}{\sqrt2}\left(\begin{array}{cc} 1 & 1 \\ i & -i \end{array} \right).
$$ 
For instance, $\xx\in\k$ if and only if $U\xx\in\k_H$. 

Finally, the diffeomorphism between $\d$ and $\h$, induced by the identifications of both spaces with $\q_\rho$, $\q_{\rho'}$, respectively, is given by the Moebius transformation
$$
\Gamma:\h\to\d , \ \Gamma(h)=(1+ih)(1-ih)^{-1}
$$
with inverse 
$$
\Gamma^{-1}:\d\to\h , \ \Gamma^{-1}(z)=i(1-z)(1+z)^{-1}.
$$

\section{Three vector bundles constructed from ${\bf pr}$}

The reader is referred to the classical text of Kobayashi and Nomizu \cite{kobayashi} for all geometrical notions appearing in this section.

Let us describe the following setting, which will allow us to present a unified approach of the three main vector bundles which we shall use to study the geometry of $\q_\rho$. They are based on the principal bundle ${\bf pr}: \k\to\q_\rho$.

Let $\eta:F\to \q_\rho$ be a fibre bundle, consider the commutative diagram
\begin{equation}\label{esquema}
\begin{array}{ccc} F_{\k} & \stackrel{\tilde{{\bf pr}}}{\longrightarrow} & F \\ \eta^* \downarrow & \ & \downarrow \eta \\ \k & \stackrel{{\bf pr}}{\longrightarrow} & \q_\rho \end{array} ,
\end{equation}
where $F_\k=\{(f,\xx): f\in F , \xx\in\k, \eta(f)={\bf pr}(\xx)\}$. This diagram describes $\eta^*$ as the bundle over $\k$ induced  by ${\bf pr}$. Note that $F_{\k}$ carries a natural action from $\u_\a$, which lifts the action of $\u_\a$ over $\k$, and which presents the bundle $\eta:F\to\q_\rho$ as the space of orbits of $\eta^*:F_{\k}\to \k$ under this action.
 
We can argue that an element $f\in F$ over $q\in\q_\rho$ is represented by an element $\tilde{f}\in F_{\k}$ in the basis $\xx\in\k$ (${\bf pr}(\xx)=q$, $f$ lies over the same $\xx$).  More precisely,   $(\tilde{f},\xx)$ and $(\tilde{f}',\xx')$  represent the same element if and only if there exists $u\in\u_\a$ such that $\tilde{f}'=\tilde{f} u$ and $\xx'=\xx u$.

\begin{defi}
Given a bundle $\eta:F\to\q_\rho$, we call a $\k$-{\it presentation} of $\eta$ a bundle $\zeta:Z\to\k$, with an action of $\u_\a$ over $T$ which lifts the action of $\u_\a$ over $\k$, and a  bundle isomorphism $\Phi: Z\to F_{\k}$ preserving the action of $\u_\a$,
$$
\begin{array}{ccc} Z & \stackrel{\Phi}{\longrightarrow} & F_{\k}
\\ \zeta\searrow & \ & \swarrow \eta \\ \ & \k & \ \end{array} .
$$
\end{defi}

Let us give three examples of $\k$-presentations which will be relevant in this paper.

\subsection{ The  {\it tautological  bundle} over $\q_\rho$}
Consider
$$
\xi: \e\to\q_\rho,
$$
where $\e=\{(q,\xx)\in M_2(\a)\times \a^2: q\in\q_\rho , \xx\in R(q)\}$ and $\xi(q,\xx)=q$. Over each idempotent $q$ we put the vectors in the range of $q$. 

\begin{prop}\label{fibrado xi}
$\xi$ is a locally trivial vector bundle.
\end{prop}
\begin{proof}
Fix $q_0\in\q_\rho$. The map $\pi_{q_0}:\u(\theta)\to \q_\rho$, $\pi_{q_0}(\tilde{a})=\tilde{a}q_0\tilde{a}^{-1}$ has smooth local cross sections: there exists $r_{q_0}>0$ and a map 
$$
g_{q_0}:\b_{q_0}=\{q\in\q_\rho: \|q-q_0\|<r_{q_0}\}\to \u(\theta),
$$
 such that $g(q)q_0g^{-1}(q)=q$ (see \cite{prcurro}). Consider the  set $\v_{q_0}=\{(q, \xx)\in\xi: q\in\b_{q_0}\}$. Clearly $\v_{q_0}$ is open in $\xi$. The map
$$
\Phi_{q_0}:\v_{q_0}\to \b_{q_0}\times R(q_0)\  , \ \  \Phi_{q_0}(q,\xx)=(q,g^{-1}(q)\xx)
$$
is a local trivialization for $\xi$ near $q_0$. 
\end{proof}

Then we have the induced bundle over $\k$, and the diagram (\ref{esquema}) in this case:
$$
\begin{array}{ccc} \e_{\k} & \stackrel{\tilde{{\bf pr}}}{\longrightarrow} & \e \\ \xi^* \downarrow & \ & \downarrow \xi \\ \k & \stackrel{{\bf pr}}{\longrightarrow} & \q_\rho \end{array} ,
$$
Consider the product bundle $\k\times \a \to \a$ and the isomorphism $\beta:\k\times\a\to \e$, $\beta(\xx,a)=(p_\xx, \xx a)$. The action of $\u_\a$ on $\k\times\a$  is given by $(\xx,a)\cdot u=(\xx u, u^* a)$. The isomorphism $\beta$ gives a $\k$-presentation for $\e$. Thus,  an element $(q,\yy)$ of $\e$ is represented by a pair $(\xx,a)$, and two pairs $(\xx,a)$, $(\xx',a')$ represent the same element of $\e$ if and only if there exists $u\in\u_\a$ such that $\xx'=\xx u$ and $a'=u^* a$.

\subsection{The {\it  coefficient} bundle}

Analogously, we shall describe the bundle over $\q_\rho$, whose fiber over $q$ is the space of  right $\a$-module endomorphisms of $R(q)$, which we shall call the {\it coefficient bundle} 
$$
\gamma:\c\to\q_\rho .
$$
Formally, $\c=\{(q,\varphi)\in\q_\rho\times \ele_\a(R(q)): q\in\q_\rho\}$, where $\ele_\a(R(q))$ denotes the space of right $\a$-module endomorphisms of the module $R(q)$, and the map $\gamma:\c\to\q_\rho$ is $\gamma(q,\varphi)= q$. Given $\xx\in\k$ with $p_\xx=q$, an endomorphism $\varphi$ of $R(q)$  gives $\varphi(\xx)=\xx a$. We say that $\xx$ is a {\it basis} for $R(q)$, and that $a\in\a$ is the {\it  matrix} of $\varphi$ in the basis $\xx$.  If we change $\xx$ for $\yy=\xx u\in R(q)$ for $u\in\u_\a$, and denote by $b$ the matrix of $\varphi$ at $\yy$, $\varphi(\yy)=\yy b$, then 
\begin{equation}\label{x u}
\xx ub=\yy b=\varphi(\yy)=\varphi(\xx)u=\xx a u,
\end{equation}
i.e., $b=u^*au$. In other words, the endomorphisms can be identified with the pairs $(\xx,a)\in\k\times \a$, subject to the equivalence relation
\begin{equation}\label{relacion x u}
(\xx,a)\sim(\xx u, u^*au) \ , \ \ u \in\u_\a.
\end{equation}

\begin{nota}
We denote by 
$\Big[(\xx,a)\Big]$  the endomorphism of the module $R(p_\xx)$ which  has matrix $a$ in the basis $\xx$ .
\end{nota}

\begin{prop} The space $\c$ is a C$^\infty$ differentiable manifold and the map $\gamma:\c\to\q_\rho$ is a C$^\infty$  trivial fiber bundle. 
\end{prop}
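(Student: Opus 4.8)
The plan is to recognize $\c$ as the fibre bundle associated to the principal bundle ${\bf pr}:\k\to\q_\rho$ through the conjugation action of $\u_\a$ on $\a$, and then to exploit the fact that ${\bf pr}$ is already globally trivial. The discussion preceding the statement --- equations (\ref{x u}) and (\ref{relacion x u}) --- shows that an endomorphism $\varphi\in\ele_\a(R(q))$ is faithfully encoded by a pair $(\xx,a)\in\k\times\a$, where $p_\xx=q$ and $\varphi(\xx)=\xx a$, two pairs $(\xx,a)$ and $(\xx u, u^*au)$ representing the same $\varphi$. Thus, as a set, $\c$ is the quotient $(\k\times\a)/\u_\a$ for the action $(\xx,a)\cdot u=(\xx u, u^*au)$, i.e. the associated bundle $\k\times_{\u_\a}\a$.

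First I would record that ${\bf pr}:\k\to\q_\rho$ is a \emph{trivial} principal $\u_\a$-bundle: by the Theorem of Section 2 it carries the global cross section ${\bf sr}(q)=\lambda_q\ee_1$ and the explicit trivialization $\Phi$. Using ${\bf sr}(q)\in\k$ as a distinguished basis of $R(q)$, I would then define the candidate global trivialization
$$
\Psi:\c\to\q_\rho\times\a , \quad \Psi(q,\varphi)=(q,a) \ \hbox{ where } \ \varphi({\bf sr}(q))={\bf sr}(q)\,a ;
$$
here $a\in\a$ is uniquely determined because ${\bf sr}(q)$ is a basis of $R(q)$. In terms of representatives this reads $\Psi\big(\big[(\xx,a)\big]\big)=(p_\xx, v^*av)$ with $v=\theta(\xx,{\bf sr}(p_\xx))\in\u_\a$, and one checks directly from the change-of-basis rule that this is independent of the representative: replacing $\xx$ by $\xx w$ turns $v$ into $w^*v$ and $a$ into $w^*aw$, leaving $v^*av$ unchanged.

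Next I would verify that $\Psi$ is a bijection. Injectivity is immediate, since an endomorphism is determined by its value on the basis ${\bf sr}(q)$; surjectivity follows by assigning to $(q,a)$ the endomorphism of $R(q)$ fixed by ${\bf sr}(q)\mapsto{\bf sr}(q)\,a$. I then transport the product $C^\infty$ structure of $\q_\rho\times\a$ to $\c$ via $\Psi$, thereby making $\c$ a differentiable manifold; this structure coincides with the associated-bundle one because the cross section ${\bf sr}$ and the trivialization $\Phi$ of the Theorem are smooth and the module operations of $\a$ are smooth. Since $\gamma=\pi_1\circ\Psi$, with $\pi_1$ the first projection, $\gamma$ is smooth and $\Psi$ is a global trivialization; hence $\gamma:\c\to\q_\rho$ is a $C^\infty$ trivial fibre bundle.

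The only genuinely delicate point is the bookkeeping relating the two descriptions of $\c$ --- the intrinsic one by pairs $(q,\varphi)$ and the presentation one by classes $[(\xx,a)]$ --- and the verification that $\Psi$ does not depend on the basis $\xx$ chosen in the fibre over $q$. This is exactly the content of (\ref{x u}), which converts a change of representative $\xx\mapsto\xx u$ into the conjugation $a\mapsto u^*au$ that $\Psi$ neutralizes by evaluating in the fixed basis ${\bf sr}(q)$. Once this compatibility is secured, the triviality of $\gamma$ is a formal consequence of the triviality of the principal bundle ${\bf pr}$.
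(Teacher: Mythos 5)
Your proposal is correct, and its centerpiece coincides with the paper's: your map $\Psi$ is exactly the paper's global trivialization $\Phi(q,\varphi)=(q,a)$, where $a$ is the matrix of $\varphi$ in the basis $\ww={\bf sr}(q)=\lambda_q\ee_1$ supplied by the global cross section of the principal bundle ${\bf pr}:\k\to\q_\rho$. Where you genuinely diverge is the manifold-structure step. The paper first builds an intrinsic atlas: around each $q_0$ it uses a local cross section $g_{q_0}$ of the action map $\pi_{q_0}:\u(\theta)\to\q_\rho$ to conjugate endomorphisms back to $\ele_\a(R(q_0))$, giving charts $(q,\varphi)\mapsto(q,g_{q_0}\varphi g_{q_0}^{-1})$ whose transition maps $(q,\psi)\mapsto(q,g_{q_1}g_{q_0}^{-1}\psi g_{q_0}g_{q_1}^{-1})$ are visibly $C^\infty$; only afterwards does it exhibit $\Phi$ as a global trivialization. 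You instead transport the product structure of $\q_\rho\times\a$ through the global bijection $\Psi$ outright, which makes the triviality of $\gamma$ immediate but on its face ties the smooth structure to the particular section ${\bf sr}$; you compensate with the change-of-basis bookkeeping (correctly computed: $v=\theta(\xx,{\bf sr}(p_\xx))$ is unitary, replacing $\xx$ by $\xx w$ sends $v\mapsto w^*v$ and $a\mapsto w^*aw$, leaving $v^*av$ fixed) and with the remark that the transported structure agrees with the associated-bundle structure on $(\k\times\a)/\u_\a$. The trade-off is this: the paper's atlas shows from the outset that the smooth structure is independent of any global choice, and its method would survive even if ${\bf pr}$ admitted only local sections, whereas your route is shorter and in fact makes explicit a point the paper leaves tacit --- that its $\Phi$ is smooth with respect to the atlas it constructed, which in your setup holds by definition. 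Both arguments are sound.
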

\begin{proof}
We define a  smooth structure on $\c$ by  constructing  local charts. Fix $(q_0,\varphi_0)\in\c$. Consider the set
$$
\b_{q_0}=\{(q,\varphi)\in\c: \|q-q_0\|<r_{q_0}\},
$$ 
where $r_{q_0}$ is the radius given in the proof of Proposition \ref{fibrado xi}. There exists a map $g_{q_0}$ defined on such idempotents $q$, with values in $\u(\theta)$, such that  $g_{q_0}(q)q_0g_{q_0}^{-1}(q)=q$.  Note that this implies that the $\a$-module homomorphism $g_{q_0}$ (acting in $\a^2$) maps $R(q_0)$ onto $R(q)$. Then $g_{q_0}\varphi g_{q_0}^{-1}$ is a module homomorphism acting in $R(q_0)$. The  map
$$
\b_{q_0}\to \b_0=\{(q,\psi): q\in\q_\rho, \|q-q_0\|<r_{q_0}, \psi\in\ele_\a(R(q_0))\} \ , \  (q,\varphi)\mapsto (q,g_{q_0}\varphi g_{q_0}^{-1})
$$
is one to one.
The set $\b_0$ is clearly a C$^\infty$ manifold, because it is an open subset of $\q_\rho\times \ele_\a(R(q_0))$, which is itself a product of a manifold and a Banach space. If $\b_{q_0}\cap\b_{q_1}\ne\emptyset$, the transition map between $\b_0$ and $\b_1$, which is given by
$$
\b_0\ni (q,\psi)\mapsto (q,g_{q_1}g_{q_0}^{-1} \psi g_{q_0}g_{q_1}^{-1})\in\b_1,
$$ 
is clearly C$^\infty$. This defines a C$^\infty$ structure on $\c$, which makes the projection map $\gamma:\c\to\q_\rho$ a C$^\infty$ map.

Let us prove that it is a trivial bundle.
Consider the map $\Phi:\c\to\q_\rho\times\a$, $\Phi(q,\varphi)=(q, a)$, where $a\in\a$ is the matrix of $\varphi$ in the basis $\ww={\bf sr}(q)$ of $q$. This gives a global trivialization for $\gamma$.
\end{proof}

As in the previous cases, the coefficient bundle $\gamma:\c\to\q_\rho$ induces the bundle $\gamma^*:\c_{\k}\to\k$ and the commutative diagram (\ref{esquema}) in this setting becomes
$$
\begin{array}{ccc} \c_{\k} & \stackrel{\tilde{{\bf pr}}}{\longrightarrow} & \c \\ \gamma^* \downarrow & \ & \downarrow \gamma \\ \k & \stackrel{{\bf pr}}{\longrightarrow} & \q_\rho \end{array} .
$$
The fiber $(\c_{\k})_\xx$ over $\xx\in\k$ consists of the endomorphisms of $R(p_\xx)$. Therefore, one has the representation $\Phi:\k\times \a\to \c_{\k}$, given by
$$
\Phi(\xx, a)=(\xx, [(\xx,a)]).
$$
\subsection{The connection in the coefficient bundle.}

Since  $\c_q$ consists    of  endomorphisms $\xi_q$, $q\in\q_\rho$, the standard connection on $\c$ is given by the {\it Leibnitz formula}. Let $\XX\in T\q_\rho$    and $\xx=\xx(t)$  adjusted to $\XX$. Let $\sigma$ and $\varphi$ be cross sections of $\xi$ and $\c$, respectively.  We define the covariant derivative  $D_\XX \varphi$ by the rule
\begin{equation}\label{conexion fibrado coeficientes}
(D_\XX\varphi)\sigma:= D_\XX(\varphi\sigma)-\varphi(D_\XX\sigma).
\end{equation}

Alternatively, write $\sigma(t)=\xx(t)a(t)$ and $\varphi_t(\zz)=\varphi_t(\xx b)=\xx(t)\lambda_t\ b$
for $\lambda, b\in\a$. Let us compute both terms in (\ref{conexion fibrado coeficientes}).
$$
D_{_\XX} (\varphi\sigma)=D_{_\XX}(\varphi(\xx a))=D_{_\XX}(\xx \lambda a)=\xx\left(\dot{\lambda}a+\lambda\dot{a}+ \theta(\xx,\dot{\xx})\lambda a\right)
$$
On the other hand
$$
\varphi(D_{_\XX}\sigma)=\varphi(\xx \dot{a}+p_\xx(\dot{\xx})a)=\xx\lambda\dot{a}+\varphi(p_\xx(\dot{\xx})a.
$$
Put $p_\xx(\dot{\xx})=\xx b$, where $b=\theta(\xx,\dot{\xx})$, and then
$$
\varphi(D_{_\XX}\sigma)=\xx\lambda \dot{a}+\xx\lambda\theta(\xx,\dot{\xx})a.
$$
Substracting as in (\ref{conexion fibrado coeficientes}), one gets
\begin{equation}\label{conexion fibrado coeficientes bis}
(D_{_\XX} \varphi)\sigma=\xx\left(\dot{\lambda}+[\theta(\xx,\dot{\xx}),\lambda]\right) a.
\end{equation}

\subsection{The tangent bundle $T\q_\rho$}

In the fibration ${\bf pr}:\k\to\q_\rho$, note that for any $\xx\in\k$, the nullspace $N(p_\xx)$ is a subspace of $(T\k)_\xx$. 
We shall call $N(p_\xx)$ the {\it horizontal space} over $\xx$, ${\bf H}_\xx:=N(p_\xx)$. The differential
$$
(d{\bf pr})_\xx:(T\k)_\xx\to (T\q_\rho)_{p_\xx}
$$
is bijective when restricted to ${\bf H}_\xx$. Then we have the bundle $\nu:{\bf H}\to \k$, where ${\bf H}=\{(\zz,\xx): \xx\in\k, \ \zz\in {\bf H}_\xx\}$, and the map is given by $\nu(\zz,\xx)=\xx$. Thus we have the $\k$-presentation
$$
\begin{array}{ccc} {\bf H} & \stackrel{\Phi}{\longrightarrow} & (T\q_\rho)_{\k}
\\ \nu\searrow & \ & \swarrow \tau^* \\ \ & \k & \ \end{array} ,
$$
where $\tau: T\q_\rho\to \q_\rho$ is the tangent bundle and $\tau^*:(T\q_\rho)_{\k}\to\k$ is the induced bundle. The isomorphism $\Phi$ is given by the differential of ${\bf pr}$ restricted to ${\bf H}$, namely
\begin{equation}\label{ref 34}
\Phi(\xx,\zz)=(\xx, (d{\bf pr})_\xx(\zz)) \ , \hbox{ for } \xx\in\k \ , \ \zz\in {\bf H}_\xx .
\end{equation}
 Then a  tangent vector to $\q_\rho$ at $p_\xx$ is represented by a pair $(\xx,\zz)$, $\xx\in\k$, $\zz\in{\bf H}_\xx$;  $(\xx,\zz)$ and $(\xx u,\zz')$ represent the same vector if and only if $\zz'=\zz u$ ($u\in\u_\a$).

\subsubsection{$T\q_\rho$ as a $\c$-module}
Recall that the fiber $\c_q$ can be presented as the set of pairs $(\xx,a)\in\k\times\a$ modulo the equivalence $(\xx,a)\sim(\xx u, u^*au)$ for $u\in\u_\a$. Since each submodule $R(q)$ has a basis consisting of one element, then $\ele_\a(R(q))$ is in one to one correspondence with $\a$: having chosen a basis $\xx\in\k$ for $R(q)$, the map $\a\to\c_q$, $a\mapsto (\xx,a)$ is a $*$-isomorphism. Thus, each fiber $\c_q$ has the structure  of a C$^*$-algebra, namely $\a$. Clearly the structure does not depend on the choice of the basis; another basis $\xx u$  provides the same structure, because $Ad(u)$ is a $*$-automorphism of $\a$.

On the other hand, we saw before  that the tangent bundle has a similar presentation. Each $\XX\in(T\q_\rho)_q$ is given by the set of pairs $(\xx,\vv)\in\k\times \a^2$, modulo the equivalence $(\xx,\vv)\sim(\xx u,\vv u)$ for $u\in\u_\a$ and $\vv=\XX\xx=\kappa_\xx(\XX)$. Accordingly, we denote $\XX\in(T\q_\rho)_q$ as $[(\xx,\vv)]$.

\begin{defi}\label{def 35}
For $\XX\in(T\q_\rho)_q$ and $\varphi\in\ele_\a(R(q))$ we define the product
$$
\XX\cdot\varphi:=[(\xx,\vv a)]\in(T\q_\rho)_q
$$
if $\xx\in\k$, $p_\xx=q$, $\varphi=[(\xx,a)]$ and $\XX=[(\xx,\vv)]$.
\end{defi}
Observe that this operation is well defined: if we change the basis to $\xx u$, then $\varphi$ and $\XX$ are represented by $(\xx u, u^*au)$ and $(\xx u, \vv u)$ respectively, and the product in the new referential is $(\xx u,\vv u u^*au)=(\xx u,\vv au)$ which represents the same tangent vector $\XX\cdot \varphi$ as $(\xx,\vv a)$.

Thus, the tangent bundle $T\q_\rho$ can be regarded as a bundle of right modules  over the coefficient bundle, which is a bundle of C$^*$-algebras (isomorphic to $\a$).
\begin{rem} {\it The tautological bundle over $\k$}.

Let us consider the pullback $\e'={\bf pr}^*\e$  of $\e$ by ${\bf pr}$,  
$$
\xi ':\e '=\{(\xx, \vv)\in\k\times\a^2: p_\xx\vv=\vv\}\to \k \ , \ \ \xi'(\xx,\vv)=\xx.
$$
Pick $\xx\in\k$ and let $\e'_\xx$ be the fiber of  $\x'$ over $\xx$.  Then the map 
$$
\aa\to\e'_\xx\  , \ \  a\mapsto \xx a
$$
is an isomorphism of right $\a$-modules, with inverse $\vv\mapsto \theta(\xx,\vv)$. It can be regarded as a chart for $\e'_\xx$  associated to $\xx$.  Let $\yy\in\e_{p_\xx}$ (i.e., $p_\xx=p_\yy=q$); then there exists a unitary $u\in\u_\a$ such that $\yy=\xx u$.  The isomorphism between $\a$ and $\e'_\yy$  is $\a\ni b\mapsto \yy a$. The coordinates $a$ and $b$ of  a given $\vv\in\e_q$ are related by $\xx ub=\yy b=\vv=\xx a$, and thus, $a=ub$. Then we can regard the vectors $\vv\in\e_q$ as pairs $(\xx,a)\in\k\times\a$, with the following equivalence relation:
$$
(\xx,a)\sim (\xx u, u^*a) \ , \  \ u\in\u_\a.
$$
\end{rem}

\begin{rem} {\it The connection in the tautological bundle.}

The covariant derivative in the tautological bundle $\xi:\e\to \q_\rho$ is given by
$$
D_\XX\sigma=q(\XX\bullet\sigma),
$$
where $\sigma$ is a cross section for $\xi$ and $\XX\bullet \sigma$ is the directional derivative of $\sigma$, considering $\sigma$ as a function with values in $\a^2$. Let us write this explicitly, in referential terms: let $q(t)$  be a smooth curve in $\q_\rho$, with $\dot{q}=\XX$, and let $\xx(t)$ be a smooth lifting of $q(t)$ in $\k$,   i.e, $ p_{\xx(t)}=q(t)$. Then $\sigma(t)=\xx(t) a(t)$ where $a(t)\in\a$ is a smooth curve. We have
$$
D_{_\XX}\sigma=\frac{d}{dt} q(\xx(t) a(t))=q(\dot{\xx}(t)a(t)+\xx(t)\dot{a}(t))=\xx\left(\dot{a}(t)+\theta(\xx(t),\dot{\xx}(t))a(t)\right).
$$ 
This formula can be read as follows: the covariant derivative has two terms in the fibers of $\xi_q$, $\xx a$ and the component of $\dot{\xx}$ in $R(q)$ multiplied by $a$. Note also that since $q\xx=\xx$, $\dot{\xx}$ decomposes as $q\xx$ in $R(q)$ plus $\XX\xx$ in $N(q)$.
\end{rem}

\section{The complex  structure of $\q_\rho$}
In this section, we shall define a generalized complex structure on $\q_\rho$. This means a smooth map $q\mapsto \ii_q$, where
\begin{equation}\label{estructura compleja}
\ii_q:(T\q_\rho)_q\to(T\q_\rho)_q
\end{equation}
is a bounded linear map such that $\ii_q^2=-1_{(T\q_\rho)_q}$, with an integrability property.  For general ideas about complex structures in finite dimensional manifolds, we refer the reader to \cite{kobayashi}. 

Recall, from Section 2, the map 
$$
{\bf pr}:\k\to\q_\rho ,  \ {\bf pr}(\xx)=\xx\xx^*\rho
$$
and its tangent map $(d{\bf pr})_\xx\YY=(\YY\xx^*+\xx\YY^*)\rho$.

Fix $q\in\q_\rho$ and $\xx\in\k$ such that $q=p_\xx$. Then, for all $\ZZ\in(T\q_\rho)_q$,
 it holds that 
$$
(d{\bf pr})_\xx \ i\ZZ\xx=i\ (\ZZ\xx\xx^*- \xx\xx^*\ZZ^*)\rho=i\ (\ZZ q-q \ZZ)= i\ \ZZ (2q-1),
$$ 
because $q=p_\xx=\xx\xx^*\rho$, $\ZZ^*\rho=\rho\ZZ$ and $\ZZ q+q\ZZ=\ZZ$.
This shows that
$$
\ii_q\ZZ:=(d{\bf pr})_\xx \ i\ \ZZ_\xx=i\ \ZZ(2q-1)
$$
is a well defined map $\ii_q:(T\q_\rho)_q\to(T\q_\rho)_q$. The proof that $\ii_q(\ii_q\ZZ)=-\ZZ$ is a simple computation.
\begin{rem}
In the particular case when $q=p=\left(\begin{array}{cc} 1 & 0 \\ 0 & 0 \end{array}\right)$, and $\xx=\ee_1$, we have, in matrix form,
$\ZZ=\left( \begin{array}{cc} 0 & Z_1 \\ -Z_1^* & 0 \end{array}\right)$, and 
$$
\ii_{p}\ZZ=i\ZZ\rho=\left(\begin{array}{cc} 0 & -iZ_1 \\ -iZ_1^* & 0 \end{array} \right)= -i \left(\begin{array}{cc} 0 & Z_1 \\ Z_1^* & 0 \end{array} \right).
$$
\end{rem}
In order to see that this  complex structure is integrable we proceed as follows. 
First, we note that it is invariant under the action of the group $\u(\theta)$. 
Next, we identify $\q_\rho$ with the disk $\d=\{a\in\a: \|a\|<1\}\subset \a$ (see \cite{tejemas}, Section 7, in particular Theorem 7.3). The disk $\d$ has a natural complex structure, as an open subset of the Banach space $\a$, which is invariant for the action of $\u(\theta)$ (the identification $\q_\rho\simeq\d$ is $\u(\theta)$-equivariant \cite{tejemas}, Lemma 7.2).
In order to prove that our complex structure is {\it integrable}, it suffices to show that these structures coincide at the point  $q=p$, which corresponds to $0\in\d$ in the identification $\q_\rho\simeq\d$:
\begin{lem}
The complex multiplication defined in $\q_\rho$ corresponds, under the identification $\q_\rho\simeq\d$, to the usual multiplication by the imaginary constant $i$.
\end{lem}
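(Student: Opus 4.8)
The plan is to verify the claimed coincidence directly at the single point $q=p$, which corresponds to $0\in\d$. As explained in the paragraph preceding the statement, the $\u(\theta)$-invariance of both complex structures together with the homogeneity of the action reduces integrability to this one base-point check, so it is enough to show that the differential at $p$ of the diffeomorphism $\Psi:\q_\rho\to\d$, $\Psi(p_\xx)=x_2x_1^{-1}$, intertwines $\ii_p$ with multiplication by the scalar $i$ on $T_0\d=\a$. Concretely, I want $(d\Psi)_p(\ii_p\ZZ)=i\,(d\Psi)_p(\ZZ)$ for every $\ZZ\in(T\q_\rho)_p$.

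First I would exploit the factorization $\Psi\circ{\bf pr}=\hat{\pi}$, where $\hat{\pi}(\xx)=x_2x_1^{-1}$, which holds since ${\bf pr}(\xx)=p_\xx$ and $\Psi(p_\xx)=x_2x_1^{-1}$. Differentiating and using the lifting lemma, $(d{\bf pr})_{\ee_1}\kappa_{\ee_1}(\ZZ)=\ZZ$, one obtains for $\ZZ\in(T\q_\rho)_p$
$$
(d\Psi)_p(\ZZ)=(d\hat{\pi})_{\ee_1}\bigl(\kappa_{\ee_1}(\ZZ)\bigr)=(d\hat{\pi})_{\ee_1}(\ZZ\ee_1).
$$
This reduces everything to the explicit tangent map of $\hat{\pi}$ at $\xx=\ee_1$, where $x_1=1,\ x_2=0$. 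A one-line differentiation of $t\mapsto(x_2+tz_2)(x_1+tz_1)^{-1}$ at $t=0$ gives $(d\hat{\pi})_{\ee_1}\zz=z_2$, the second coordinate of the lifted vector.

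Next I would insert the matrix form from the preceding Remark. At $q=p$, $\xx=\ee_1$, a tangent vector is $\ZZ=\left(\begin{array}{cc}0&Z_1\\-Z_1^*&0\end{array}\right)$, so $\ZZ\ee_1=\left(\begin{array}{c}0\\-Z_1^*\end{array}\right)$ and hence $(d\Psi)_p(\ZZ)=-Z_1^*$. Applying the same recipe to $\ii_p\ZZ=i\ZZ\rho=\left(\begin{array}{cc}0&-iZ_1\\-iZ_1^*&0\end{array}\right)$ yields $(\ii_p\ZZ)\ee_1=\left(\begin{array}{c}0\\-iZ_1^*\end{array}\right)$, so $(d\Psi)_p(\ii_p\ZZ)=-iZ_1^*$. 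Comparing, $(d\Psi)_p(\ii_p\ZZ)=-iZ_1^*=i\,(-Z_1^*)=i\,(d\Psi)_p(\ZZ)$, which is precisely the assertion that $\ii_p$ is carried to multiplication by $i$.

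The only genuine obstacle is bookkeeping: one must pass tangent vectors correctly through the lift $\kappa_{\ee_1}(\ZZ)=\ZZ\ee_1$ and keep track of the adjoint in the lower entry, which is forced by the constraint $\ZZ^*\rho=\rho\ZZ$ (equivalently, the skew pairing $Z_1\mapsto -Z_1^*$ in the off-diagonal blocks). Once the reduction $(d\Psi)_p=(d\hat{\pi})_{\ee_1}\circ\kappa_{\ee_1}$ is established and $(d\hat{\pi})_{\ee_1}\zz=z_2$ is computed, the conclusion follows from a short matrix multiplication, and no integrability machinery beyond the single-point reduction already justified in the text is required.
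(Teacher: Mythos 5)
Your proposal is correct and follows essentially the same route as the paper: reduce to the single point $q=p$ by $\u(\theta)$-invariance, lift through $\ee_1\in\k$ via $\kappa_{\ee_1}(\ZZ)=\ZZ\ee_1\in N(p)$, and use $(d\hat{\pi})_{\ee_1}(\ww)=w_2$ to read off the action on $T_0\d$. The only (harmless) difference is presentational: the paper observes directly from the definition of $\ii_p$ that the horizontal lift of $\ii_p\ZZ$ is $i$ times the lift of $\ZZ$, whereas you re-derive this through the explicit matrix form $\ZZ=\left(\begin{array}{cc}0&Z_1\\-Z_1^*&0\end{array}\right)$ and the product $i\ZZ\rho$.
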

\begin{proof}
As noted in the remark above, it suffices to prove this fact at $0\in\d$, which corresponds to $p\in\q_\rho$. Recall  from Subsection \ref{subseccion de mapas} the coordinate diagram (\ref{diagrama general}):
$$
\xymatrix{
& \k \ar[ld]_{\hat{\pi }}\ar[rd]^{{\bf pr}} \\
\d \ar[rr]^{\simeq}
&& \q_\rho ,
}
$$
The corresponding elements $0\in\d$  and $p\in\q_\rho$ lift to $\ee_1=\left(\begin{array}{c} 1 \\ 0 \end{array} \right)\in\k$. Let $\zeta\in(T\d)_0=\a$, and $\ZZ\in(T\q_\rho)_p$ corresponding to the same vertical  tangent vector $\vv=\left(\begin{array}{c} 0 \\ \zeta \end{array} \right)$ in $N(p)\subset(T\k)_{\ee_1}$. 
By the definition given above (\ref{estructura compleja}), multiplication by $i$ in $(T\q_\rho)_p$ consists in the usual multiplication $i \vv=\left(\begin{array}{c} 0 \\ i \ \zeta \end{array} \right) \in N(p)$. Let us verifiy that this induces the usual multiplication by $i$ in $\a$. The differential $(d\hat{\pi})_{\ee_1}$ of $\hat{\pi}: \k\to \d$, $\hat{\pi}(\xx)=x_2x_1^{-1}$ at $\ee_1$, for $\ww=\left(\begin{array}{c} w_1\\ w_2\end{array}\right)$, is given by
$$
(d\hat{\pi})_{\ee_1}(\ww)=w_2.
$$
Therefore $i \vv$ is mapped to $i \zeta$, the usual multiplication by $i$ in $\a$.
  \end{proof}

\section{The Hilbertian product in $\q_\rho$}

We define now on $\q_\rho$ a generalization of the notion of Hermitian structure. It consists of a Hilbertian product on each $(T\q_\rho)_q$ with values in $\c_q$, which is invariant under the action of $\u(\theta)$.

\begin{defi}\label{prod int}
Given $\XX,\YY\in(T\q_\rho)_q$, we put
$$
\langle \XX,\YY\rangle_q=\hbox{ the endomorphism of } R(q) \hbox{ given by the pair }(\xx,-\theta(\kappa_{\xx}(\XX),\kappa_{\xx}(\YY)))
$$
$$
=\Big[ (\xx,-\theta(\kappa_{\xx}(\XX),\kappa_{\xx}(\YY)))\Big],
$$
where $\xx\in\k$ is such that $p_\xx=q$, and the brackets denote the equivalence class as in (\ref{relacion x u}).
\end{defi}
Note that (regarding $\XX$ and $\YY$ as matrices in $M_2(\a)$)
$$
-\theta(\kappa_{\xx}(\XX),\kappa_{\xx}(\YY))=-(\XX\xx)^*\rho (\YY\xx)=-\xx^*\XX^*\rho\YY\xx.
$$
\begin{rem}
Some remarks are in order.
\begin{enumerate}
\item
The minus sign  is needed so that the form is positive. 
Indeed, recall that $\theta$ is negative in $N(q)$ and $\XX\xx\in N(q)$,
$$
\langle \XX,\XX\rangle_q=[(\xx, -\theta(\XX\xx,\XX\xx))].
$$
Recall that the C$^*$-algebra structure of the fibers $\c_q$ of $\c$ is that of $\a$: an endomorphism $\varphi$ of $\ele_\a(R(q)\simeq \a$ is positive if and only if any of its matrices is a positive element of $\a$. Recall from the definition of $\q_\rho$, that the projection $q$ decomposes $\theta$, i.e., $\theta$ is positive in $R(q)$ and negative in $N(q)$. On the other hand, as seen above, $\XX\xx\in N(q)$.
\item
If one changes $\xx$ for $\xx u$ for some $u\in\u_\a$, then 
$$
\theta(\kappa_{\xx u}(\XX u),\kappa_{\xx u}(\YY u))=u^* \theta(\kappa_{\xx}(\XX),\kappa_{\xx}(\YY))u,
$$
i.e. $\langle \XX,\YY\rangle_q$ is indeed an element of $\c_q$.
\item
This Hilbertian  product is linear with respect to the product defined in  \ref{def 35}: if $q=p_\xx\in\q_\rho$, $\XX, \YY\in(T\q_\rho)_q$ and $\varphi\in\ele_\a(R(q))$, then
$$
\langle\XX,\YY\cdot\varphi\rangle_q=\langle\XX,\YY\rangle_q\cdot\varphi.
$$
Indeed, if we use the basis $\xx\in\k$ for $R(q)$,  then $\YY\cdot\varphi$ is represented by the pair $(\xx,\YY \xx a)$, and 
$$
\theta(\kappa_{\xx}(\XX),\kappa_{\xx}(\YY\cdot\varphi))=(\XX\xx)^*\rho\YY\xx a,
$$
which is the  element of $\a$ that one uses to define $\langle\XX,\YY\rangle_q\cdot\varphi$ (in terms of the basis $\xx$).
\end{enumerate}
\end{rem}

\begin{teo}
The complex structure on $\q_\rho$  is compatible with the Hilbertian product in the following sense:
$$
\langle \ii_q\XX, \YY\rangle_q=-\langle \XX, \ii_q\YY\rangle_q,
$$
for $q=p_\xx\in\q_\rho$ and $\XX,\YY\in(T\q_\rho)_q$.
\end{teo}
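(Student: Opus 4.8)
The plan is to fix a referential and reduce the identity to a short computation with the lifting form $\kappa_\xx$ and the sesquilinear form $\theta$. Fix $\xx\in\k$ with $p_\xx=q$; by Definition~\ref{prod int} the endomorphism $\langle\XX,\YY\rangle_q$ is the element of $\c_q$ represented in the basis $\xx$ by $-\theta(\kappa_\xx(\XX),\kappa_\xx(\YY))=-(\XX\xx)^*\rho(\YY\xx)\in\a$. Since negating an endomorphism negates its matrix in any basis, it suffices to compare the $\a$-valued representatives of $\langle\ii_q\XX,\YY\rangle_q$ and of $\langle\XX,\ii_q\YY\rangle_q$ and to verify that they are opposite.

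The key identity I would first establish is that $\ii_q$ acts on lifts simply by the scalar $i$. Using $\ii_q\ZZ=i\,\ZZ(2q-1)$ together with $q\xx=\xx\xx^*\rho\xx=\xx\,\theta(\xx,\xx)=\xx$, hence $(2q-1)\xx=\xx$, I would obtain
$$
\kappa_\xx(\ii_q\XX)=(\ii_q\XX)\xx=i\,\XX(2q-1)\xx=i\,\XX\xx=i\,\kappa_\xx(\XX),
$$
and, in the same way, $\kappa_\xx(\ii_q\YY)=i\,\kappa_\xx(\YY)$.

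Then I would substitute these into the definition of the Hilbertian product and invoke the sesquilinearity of $\theta(\vv,\ww)=\vv^*\rho\ww$, which is conjugate-linear in its first slot and linear in its second, so that $\theta(i\vv,\ww)=-i\,\theta(\vv,\ww)$ while $\theta(\vv,i\ww)=i\,\theta(\vv,\ww)$. This makes $\langle\ii_q\XX,\YY\rangle_q$ represented by $-\theta(i\kappa_\xx(\XX),\kappa_\xx(\YY))=i\,\theta(\kappa_\xx(\XX),\kappa_\xx(\YY))$, and $\langle\XX,\ii_q\YY\rangle_q$ represented by $-\theta(\kappa_\xx(\XX),i\kappa_\xx(\YY))=-i\,\theta(\kappa_\xx(\XX),\kappa_\xx(\YY))$, which are opposite, giving the asserted identity.

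Because the whole argument is forced once $\kappa_\xx(\ii_q\XX)=i\,\kappa_\xx(\XX)$ is in place, I expect no genuine obstacle; the only delicate point is the sign bookkeeping in $\theta$, that is, keeping track that the complex unit is the central element $i\cdot 1_\a$, so that $(i\vv)^*=-i\,\vv^*$ forces conjugate-linearity in the first argument and produces the single sign change between the two sides. Basis-independence needs no separate check here, as it is already furnished by the second item of the Remark following Definition~\ref{prod int}.
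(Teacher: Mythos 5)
Your proposal is correct and follows essentially the same route as the paper: both arguments substitute $\ii_q\ZZ=i\,\ZZ(2q-1)$ into the lifted representatives, use $(2q-1)\xx=\xx$ to reduce to multiplication of the lift by the central scalar $i$, and then let the sesquilinearity of $\theta$ (conjugate-linear in the first slot, linear in the second) produce the opposite factors $\mp i$ on the two sides. Your isolation of the identity $\kappa_\xx(\ii_q\XX)=i\,\kappa_\xx(\XX)$ is merely a tidier packaging of the paper's inline computation, and your appeal to the Remark for basis-independence is exactly what the paper's bracket notation presupposes.
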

\begin{proof}
We have that (for $\xx$ such that $p_\xx=q$)
$$
\langle\XX,\ii_q\YY\rangle_q=-\Big[(\xx,\theta(\XX\xx,\ii_q\YY\xx))\Big]=-\Big[(\xx,i\xx^*\XX^*\rho\YY(2q-1)\xx)\Big]=i\langle\XX,\YY\rangle_q.
$$
Here $(2q-1)\xx=\xx$. On the other hand
$$
\langle\ii_q\XX,\YY\rangle_q=-\Big[(\xx,\theta(\ii_q\XX\xx,\YY\xx))\Big]=-\Big[(\xx,i\XX(2q-1)\xx,\YY\xx)) \Big]=-i\langle\XX,\YY\rangle_q.
$$
\end{proof}
\begin{rem}
We saw earlier that the tangent space $(T\q_\rho)_q$ is a right module over the corresponding fiber $\c_q$ of the coefficient bundle. The scalar field $\mathbb{C}$ lies naturally inside $\c_q$ as 
$$
\mathbb{C}\ni z\longleftrightarrow \Big[(\xx,z I)\Big]\in\c_q,
$$
because $uz Iu^*=z I$ for all $u\in\u_\a$. Thus $(T\q_\rho)_q$ is a $\mathbb{C}$-vector space. It is clear that the complex structure in $T\q_\rho$ which we defined above (in terms of the identification $\Phi$ of (\ref{ref 34}))  coincides with the one induced by  operator $\ii$.
\end{rem}
\begin{rem}
One might wish extend the argument above, namely, to use the identification $\q_\rho\simeq\d\subset\a$ in order to endow the tangent bundle $T\q_\rho$ with a right action of the algebra $\a$. However, this action depends on the immersion (at the tangent level) and is not intrinsic. It works in the case of $\mathbb{C}$, as it  works  also for the center of $\a$: endomorphisms with matrix in the center of $\a$ have the same matrix for any basis of the given submodule.
\end{rem}
\subsection{The symplectic form $\omega$}
If $\XX,\YY\in(T\q_\rho)_q$, the decomposition in selfadjoint and anti-selfadjoint parts
$$
\langle\XX,\YY\rangle_q=(\XX,\YY)_q+ i \omega(\XX,\YY)_q
$$
provides a  generalized {\it Riemannian} form
$$
(\XX,\YY)_q=Re \langle\XX,\YY\rangle_q
$$
and a generalized symplectic form
$$
\omega(\XX,\YY)_q=Im \langle\XX,\YY\rangle_q .
$$
In this paper we are interested in $\omega$.

\section{The curvature of  the canonical connection in $\e$}\label{seccion curvatura}

To simplify the computation, we make the assumption that given $\XX, \YY\in (T\q_\rho)_q$, we can construct  a smooth map $q(t,s)\in\q_\rho$ such that
$$
q(0,0)=q , \ \frac{\partial}{\partial t}q|_{(0,0)} =\XX  \ \ \hbox{ and } \ \ \frac{\partial}{\partial s}q|_{(0,0)}=\YY,
$$
and this map $q(t,s)$ is lifted to a map $\xx(t,s)\in\k$.
We assume also the existence of a cross section $\sigma$ defined on a neighbourhood of $q$. $\sigma=\xx a$, where both $\xx$ and $a$ are functions of $(t,s)$. To abbreviate, we shall write with a dot $\dot{}$ the derivatives with respect to $t$, and with a tilde $'$ the derivatives with respect to $s$. Then, differentiating
$$
D_{_\XX}\sigma=\xx(\dot{a}+\theta(\xx,\dot{\xx}) a)
$$
with respect to $s$, we get
$$
\xx\left( \dot{a}'+\theta(\xx',\dot{\xx})a +\theta(\xx,\dot{\xx}'a+\theta(\xx,\dot{\xx})a'+\theta(\xx,\xx')(\dot{a}+\theta(\xx,\dot{\xx})a)\right)=D_{_\YY} D_{_\XX} \sigma.
$$
Interchanging $'$ with $\dot{}$ we get
$$ 
\xx\left( \dot{a'}+\theta(\dot{\xx},\xx')a +\theta(\xx,\dot{\xx'}a+\theta(\xx,\xx')\dot{a}+\theta(\xx,\dot{\xx})(a'+\theta(\xx,\xx')a)\right)=D_{_\XX} D_{_\YY} \sigma.
$$

This should be specialized at $(t,s)=(0,0)$. Clearly, the lifting of $q(t,s)$ can be done in order that $\xx'$ and $\dot{\xx}$ are {\it horizontal} at $(0,0)$ (i.e., that they belong to $N(q)$). Then we get
$$
R(\XX,\YY)\sigma=D_{_\XX} D_{_\YY} \sigma-D_{_\YY} D_{_\XX} \sigma=\xx(\theta(\dot{\xx},\xx')-\theta(\xx',\dot{\xx}))a.
$$
Since $\dot{\xx}$ and $\xx'$ are horizontal, we have that 
$$
\dot{\xx}=\kappa_{\xx}(\XX)=\XX\xx \  \hbox{ and } \ \xx'=\YY\xx.
$$
Then 
\begin{equation}\label{curvatura}
R(\XX,\YY)\sigma=\Big[(\xx,\xx(\theta(\XX\xx,\YY\xx)-\theta(\YY\xx,\XX\xx))a)\Big]=\Big[(\xx,\xx\theta(\xx,[\XX,\YY]\xx)a)\Big].
\end{equation}
Here we use that $\XX$ and $\YY$, being tangent vectors of $\q_\rho$, are $\theta$-symmetric, i.e., $\XX^*\rho=\rho\XX$. 

Note that $R(\XX,\YY)$ is an endomorphism of $\e_q$.  Recall the Hilbertian $\c$-valued product in $\q_\rho$
$$
\langle \XX,\YY\rangle_q =-\Big[ (\xx, \theta(\XX\xx,\YY\xx)\Big]
$$
for $\xx\in\k$ such that $p_\xx=q$, which is an element of $\c_q$, i.e., an endomorphism of $\e_q$. Its imaginary part is 
$$
Im   \langle \XX,\YY\rangle_q=-\frac{i}{2}\Big[ (\xx, -\theta(\XX\xx,\YY\xx)+\theta(\XX\xx,\YY\xx)^*)\Big]=-\frac{i}{2}\Big[ (\xx,\theta(\YY\xx,\XX\xx)-\theta(\XX\xx,\YY\xx))\Big]
$$ 
$$
=\frac{i}{2}\Big[(\xx,\theta(\xx,[\XX,\YY]\xx)\Big].
$$
Thus, we have proved the following  result, which is a kind of prequantization of $\q_\rho$ \cite{woodhouse}:
\begin{teo}\label{61}
The curvature of the tautological bundle $\e$ and the Hilbertian product in $\q_\rho$ are related by the following formula
$$
\frac{i}{2}\  R(\XX,\YY)_q=Im \langle\XX,\YY\rangle_q=\omega(\XX,\YY)_q.
$$
\end{teo}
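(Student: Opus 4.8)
The plan is to read off both the curvature $R(\XX,\YY)_q$ and the Hilbertian product $\langle\XX,\YY\rangle_q$ as endomorphisms of the single fibre $\e_q=R(q)$, express each through its \emph{matrix} in a fixed basis $\xx\in\k$ with $p_\xx=q$, and compare. The rightmost equality $Im\langle\XX,\YY\rangle_q=\omega(\XX,\YY)_q$ requires no work: it is literally the definition of $\omega$ as the imaginary part of the Hilbertian product. Thus everything reduces to the first equality $\frac{i}{2}R(\XX,\YY)_q=Im\langle\XX,\YY\rangle_q$, which I would prove by matching matrices in $\c_q\simeq\a$.

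For the curvature side I would invoke the computation (\ref{curvatura}) already carried out: the endomorphism $R(\XX,\YY)$ acts on $\sigma=\xx a$ by $R(\XX,\YY)\sigma=\xx\big(\theta(\XX\xx,\YY\xx)-\theta(\YY\xx,\XX\xx)\big)a$, so its matrix in the basis $\xx$ is $\theta(\XX\xx,\YY\xx)-\theta(\YY\xx,\XX\xx)$, which equals $\theta(\xx,[\XX,\YY]\xx)$ once one uses the $\theta$-symmetry $\XX^*\rho=\rho\XX$, $\YY^*\rho=\rho\YY$ of tangent vectors of $\q_\rho$. For the product side I would start from Definition \ref{prod int}, so that $\langle\XX,\YY\rangle_q$ has matrix $-\theta(\XX\xx,\YY\xx)$, and take the imaginary part in the C$^*$-algebra $\c_q$, using $Im\,m=\frac{1}{2i}(m-m^*)$ together with the adjoint identity $\theta(\XX\xx,\YY\xx)^*=\theta(\YY\xx,\XX\xx)$ (which follows from $\rho^*=\rho$). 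This produces the matrix $\frac{i}{2}\big(\theta(\XX\xx,\YY\xx)-\theta(\YY\xx,\XX\xx)\big)$, i.e.\ exactly $\frac{i}{2}$ times the curvature matrix; comparing the two gives $Im\langle\XX,\YY\rangle_q=\frac{i}{2}R(\XX,\YY)_q$.

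The only points requiring care are bookkeeping rather than conceptual. First, both objects are defined only after a choice of basis $\xx$, so I must check the identity is independent of that choice; this is automatic in the equivalence-class notation $\Big[(\xx,\cdot)\Big]$, since replacing $\xx$ by $\xx u$ (with $u\in\u_\a$) conjugates every matrix by $u$ simultaneously. Second, the sign and the factor $\frac{i}{2}$ must be tracked honestly through the imaginary-part convention, using $-\frac{1}{2i}=\frac{i}{2}$. The genuinely substantive input is not in the theorem itself but in the earlier derivation of (\ref{curvatura}), where the second covariant derivatives are computed along a two-parameter family $q(t,s)$ lifted to $\xx(t,s)$ with horizontal first derivatives at the base point; granting that formula, the present statement is just the identification of these two matrix expressions.
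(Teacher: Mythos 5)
Your proposal is correct and takes essentially the same route as the paper: the paper's proof consists precisely of the derivation of (\ref{curvatura}) along a two-parameter family with horizontal lifts, followed by computing $Im\,\langle\XX,\YY\rangle_q=\frac{1}{2i}\left(-\theta(\XX\xx,\YY\xx)+\theta(\XX\xx,\YY\xx)^*\right)$ via $\theta(\XX\xx,\YY\xx)^*=\theta(\YY\xx,\XX\xx)$ and matching matrices in the basis $\xx$, exactly as you do. Your bookkeeping (the sign, the identity $-\frac{1}{2i}=\frac{i}{2}$, the $\theta$-symmetry $\XX^*\rho=\rho\XX$ giving $\theta(\xx,[\XX,\YY]\xx)$, and the $u$-conjugation check for basis independence) matches the paper's argument point for point.
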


\section{The moment map}
Let us recall the Banach-Lie algebra $\mathfrak{U}(\theta)$ of the Banach-Lie group $\u(\theta)$ defined in Section 1. Note that this Lie algebra decomposes
$$
\mathfrak{U}(\theta)=\mathfrak{U}_0(\theta)\oplus \mathfrak{U}_1(\theta),
$$
where 
\begin{equation}\label{mathfrak}
\mathfrak{U}_0(\theta)=\{\left(\begin{array}{cc} a_1 & 0 \\ 0 & a_2\end{array}\right)\in M_2(\a): a_1^*=-a_1 , a_2^*=-a_2\}
\end{equation}
and
$$
\mathfrak{U}_1(\theta)=\{\left(\begin{array}{cc} 0 & a^* \\ a & 0\end{array}\right)\in M_2(\a): a\in\a\}.
$$

If $\tilde{a}\in \mathfrak{U}(\theta)$ and $q\in\q_\rho$, we put
\begin{equation}\label{momento}
f_{\tilde{a}}(q)=\frac{1}{2i}\Big[(\xx, \theta(\xx,\tilde{a}\xx))\Big]=\frac{1}{2i}\Big[(\xx, \xx^*\rho \tilde{a} \xx)\Big],
\end{equation}
for $\xx\in\k$ such that $p_\xx=q$. Again, a simple computation shows that if one chooses $\xx u$ instead, the element $\theta(\xx,\tilde{a}\xx)$ varies accordingly: 
$$
\theta(\xx u ,\tilde{a}\xx u)= \langle \rho \xx u, \tilde{a}\xx u\rangle=u^*\langle \rho\xx, \tilde{a}\xx\rangle u=u^*\theta(\xx,\tilde{a}\xx)u.
$$
Thus, $f_{\tilde{a}}(q)\in\c_q$. We call $f_{\tilde{a}}:\q_\rho\to \c$ is {\it the moment map} of the generalized symplectic manifold $\q_\rho$.

\begin{prop}
The moment map is equivariant with respect to the action of $\u(\theta)$: if $\tilde{m}\in\u(\theta)$, $q\in\q_\rho$ and $\tilde{a}\in \mathfrak{U}(\theta)$,
$$
f_{\tilde{m}\tilde{a}\tilde{m}^{-1}}(\tilde{m}q\tilde{m}^{-1})=f_{\tilde{a}}(q).
$$
\end{prop}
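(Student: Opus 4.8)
The plan is to unwind both sides of the desired identity using the definition of the moment map in~(\ref{momento}), and to exploit the way the group action and the principal bundle ${\bf pr}:\k\to\q_\rho$ interact. The key observation is that the $\u(\theta)$-action on $\q_\rho$, namely $q\mapsto \tilde{m}q\tilde{m}^{-1}$, lifts to an action on $\k$ given simply by $\xx\mapsto \tilde{m}\xx$. First I would verify this compatibility: if $\xx\in\k$ with $p_\xx=q$, then $\tilde{m}\xx$ is the natural lift of $\tilde{m}q\tilde{m}^{-1}$. Indeed, $p_{\tilde{m}\xx}=(\tilde{m}\xx)(\tilde{m}\xx)^*\rho=\tilde{m}\xx\xx^*\tilde{m}^*\rho$, and since $\tilde{m}^\sharp=\rho\tilde{m}^*\rho=\tilde{m}^{-1}$ (the defining property of $\u(\theta)$), we have $\tilde{m}^*\rho=\rho\tilde{m}^{-1}$, so $p_{\tilde{m}\xx}=\tilde{m}\xx\xx^*\rho\tilde{m}^{-1}=\tilde{m}p_\xx\tilde{m}^{-1}=\tilde{m}q\tilde{m}^{-1}$. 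I should also check that $\tilde{m}\xx$ genuinely lies in $\k$, i.e.\ that $\theta(\tilde{m}\xx,\tilde{m}\xx)=1$ and that its first coordinate is invertible; the former is immediate since $\tilde{m}$ is $\theta$-unitary, and the latter holds because $p_{\tilde{m}\xx}=\tilde{m}q\tilde{m}^{-1}\in\q_\rho$ forces its representing element to have invertible first coordinate.

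With the lift established, I would compute $f_{\tilde{m}\tilde{a}\tilde{m}^{-1}}(\tilde{m}q\tilde{m}^{-1})$ by choosing $\tilde{m}\xx$ as the representing element of the base point $\tilde{m}q\tilde{m}^{-1}$. By the definition~(\ref{momento}),
$$
f_{\tilde{m}\tilde{a}\tilde{m}^{-1}}(\tilde{m}q\tilde{m}^{-1})=\frac{1}{2i}\Big[(\tilde{m}\xx,\ \theta(\tilde{m}\xx,\ \tilde{m}\tilde{a}\tilde{m}^{-1}\tilde{m}\xx))\Big]=\frac{1}{2i}\Big[(\tilde{m}\xx,\ \theta(\tilde{m}\xx,\ \tilde{m}\tilde{a}\xx))\Big].
$$
Now the central step is that $\theta$ is $\tilde{m}$-invariant: since $\tilde{m}\in\u(\theta)$, we have $\theta(\tilde{m}\uu,\tilde{m}\vv)=\theta(\uu,\vv)$ for all $\uu,\vv\in\a^2$. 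Applying this with $\uu=\xx$ and $\vv=\tilde{a}\xx$ collapses the coefficient to $\theta(\xx,\tilde{a}\xx)$, giving
$$
f_{\tilde{m}\tilde{a}\tilde{m}^{-1}}(\tilde{m}q\tilde{m}^{-1})=\frac{1}{2i}\Big[(\tilde{m}\xx,\ \theta(\xx,\tilde{a}\xx))\Big].
$$

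Finally I must reconcile the bases: the left-hand side is expressed in the basis $\tilde{m}\xx$ of $R(\tilde{m}q\tilde{m}^{-1})$, whereas the right-hand side $f_{\tilde{a}}(q)$ lives in $\c_q$ in the basis $\xx$. These are different fibers of $\c$, so strictly the equivariance statement should be read as asserting that the endomorphism $\big[(\tilde{m}\xx,\theta(\xx,\tilde{a}\xx))\big]$ of $R(\tilde{m}q\tilde{m}^{-1})$ corresponds to the endomorphism $\big[(\xx,\theta(\xx,\tilde{a}\xx))\big]$ of $R(q)$ under the natural identification induced by $\tilde{m}$. Since both are represented by the \emph{same} matrix element $\theta(\xx,\tilde{a}\xx)\in\a$ relative to corresponding bases $\xx$ and $\tilde{m}\xx$, they are identified, and the proof is complete. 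I expect the main subtlety here to be bookkeeping rather than analysis: one must be careful that the equivariance is an identification of coefficient-bundle elements through the canonical action of $\tilde{m}$ on $\c$ that covers $q\mapsto\tilde{m}q\tilde{m}^{-1}$, not a literal equality of elements in a fixed fiber. Once this interpretation is fixed, the two ingredients---the lift $\xx\mapsto\tilde{m}\xx$ and the $\theta$-invariance of $\tilde{m}$---do all the work.
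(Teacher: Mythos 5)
Your proposal is correct and follows essentially the same route as the paper's proof: lift the action to $\k$ via $\xx\mapsto\tilde{m}\xx$ (verifying $p_{\tilde{m}\xx}=\tilde{m}q\tilde{m}^{-1}$ and $\tilde{m}\xx\in\k$), then use the $\theta$-invariance of $\tilde{m}$ to collapse the coefficient to $\theta(\xx,\tilde{a}\xx)$. Your closing remark about the identification of the fibers $\c_{\tilde{m}q\tilde{m}^{-1}}$ and $\c_q$ is a point the paper leaves implicit, and it is handled correctly.
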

\begin{proof}
Pick $\xx\in\k$ such that $p_\xx=q$. Recall that $\tilde{m}\in\u(\theta)$ means that $\rho \tilde{m}^{-1}=\tilde{m}^*\rho$. Note that for $\yy\in\a^2$,
$$
\tilde{m}q\tilde{m}^{-1}\yy=\tilde{m}(\xx \theta( \xx, \tilde{m}^{-1}\yy))=\tilde{m}\xx \theta(\tilde{m}\xx,\yy)=p_{\tilde{m}\xx}\yy,
$$
where clearly $\tilde{m}\xx\in\k$ (see \cite{tejemas}; in general $\tilde{a}\xx\in\k$ for any $\tilde{a}\in\u(\theta)$, $\xx\in\k$). Then
$$
f_{\tilde{m}\tilde{a}\tilde{m}^{-1}}(\tilde{m}q\tilde{m}^{-1})=\Big[(\tilde{m}\xx, \frac{1}{2i}\theta(\tilde{m}\xx, \tilde{m}\tilde{a}\tilde{m}^{-1}\tilde{m}\xx))\Big]=\Big[(\tilde{m}\xx, \frac{1}{2i}\theta(\xx, \tilde{a}\xx))\Big]=f_{\tilde{a}}(q).
$$
\end{proof}

Let us compute the covariant derivative of $f_{\tilde{a}}$. We use a horizontal lifting $\xx(t)\in\k$ of the curve $q(t)$ in the direction of $\XX$ at $q$ (i.e., $q(0)=q$, $\dot{q}(0)=\XX$ and $\dot{\xx}(t)\in N(q(t))$). Then
$$
D_\XX f_{\tilde{a}}=\Big[(\xx, \frac{1}{2i}\left(\theta(\dot{\xx}, \tilde{a}\xx)+\theta(\xx,\tilde{a}\dot{\xx}))\right)\Big]=
\Big[(\xx, \frac{1}{2i}\left(\theta(\dot{\xx}, \tilde{a}\xx)-\theta(\tilde{a}\xx,\dot{\xx}))\right)\Big]=\Big[(\xx, \frac{1}{2i}\left(\theta(\dot{\xx}, \tilde{a}\xx)-\theta(\dot{\xx},\tilde{a}\xx)^*)\right)\Big]
$$
$$
=\Big[(\xx,\  Im \  \theta(\dot{\xx}, \tilde{a}\xx))\Big],
$$
where we use that $\tilde{a}$ is $\theta$-anti-symmetric. Since $\dot{\xx}$ is horizontal, it holds that $\dot{\xx}=\XX\xx$. Then
$$
D_\XX f_{\tilde{a}}=\Big[(\xx, \ Im \ \theta(\XX\xx,\tilde{a}\xx)\Big].
$$

On the other hand, when computing the curvature of the canonical connection of $\e$, we proved that for any $\XX,\YY\in(T\q_\rho)_q$
$$
Im \ \langle \XX,\YY\rangle_q=\Big[(\xx,\frac{1}{2i}\left(\theta(\XX\xx,\YY\xx)-\theta(\XX\xx,\YY\xx)^*)\right)\Big]=-\Big[(\xx, Im \ \theta(\XX\xx,\YY\xx))\Big].
$$ 
Now we consider for each $\tilde{a}\in\mathfrak{U}(\theta)$ the vector field in $\q_\rho$
$$
\XX_{\tilde{a}}(q)=(d\pi_q)_1(\tilde{a})=[\tilde{a},q],
$$
where $\pi_q:\u(\theta)\to\q_\rho$ denotes the action map: $\pi_q(\tilde{m})=\tilde{m} q \tilde{m}^{-1}$. Applying the formula above with $\XX=\XX_{\tilde{a}}$, we get
$$
\omega(\XX_{\tilde{a}},\YY)_q=Im \ \langle \XX_{\tilde{a}},\YY\rangle_q=\Big[(\xx,- Im \ \theta(\XX_{\tilde{a}}\xx,\YY\xx))\Big].
$$
Observe that $\XX_{\tilde{a}}(q)\xx=[\tilde{a},q]\xx=\tilde{a}q\xx-q\tilde{a}\xx=\tilde{a}\xx-q\tilde{a}\xx$ (recall that $q\xx=p_\xx\xx=\xx$). Note also that $\theta(q\tilde{a}\xx,\YY\xx)=0$ because $ \YY\xx\in N(q)$. Thus $\theta(\XX_{\tilde{a}}\xx,\YY\xx)=\theta(\tilde{a}\xx,\YY\xx)$. Therefore
$$
\omega(\XX_{\tilde{a}},\YY)_q=\Big[(\xx,-Im \ \theta(\tilde{a}\xx,\YY\xx))\Big].
$$
We proved before that  $D_\YY f_{\tilde{a}}=\Big[\xx,  \ Im \ \theta(\YY\xx,\tilde{a}\xx))\Big]$. Thus, we have shown that 
$$
D_\YY f_{\tilde{a}}=  \omega(\XX_{\tilde{a}},\YY).
$$
Suppose now that $f$ is a section of $\c$ over some open subset $\w$ of $\q_\rho$. Consider the covariant derivative $Df$ as a $1$-form on $\w$ with values in $\c$: $Df(\XX)=D_\XX f$. We say that a field $\XX_f$ is the {\it symplectic gradient} of $f$ if 
$$
D_\XX f=\omega(\XX,\XX_f).
$$

We are mainly interested in the case of functions of the form $f_{\tilde{a}}$, for $\tilde{a}\in\mathfrak{U}(\theta)$,
$$
f_{\tilde{a}}(q)=\frac{1}{2 i}\Big[(\xx,\theta(\xx,\tilde{a}\xx))\Big]=\frac{1}{2 i}\Big[(\xx,\xx^*\rho\tilde{a}\xx)\Big],
$$
where $q\in\q_\rho$ and $\xx\in\k$ satisfies $p_\xx=q$. Recall that the Lie algebra $\mathfrak{U}(\theta)$ consists of all matrices $\tilde{a}=\left(\begin{array}{cc} a_{11} & \alpha \\ \alpha^*  & a_{22} \end{array}\right)$, where $a_{ii}^*=-a_{ii}$. Recall also the identity $D_\XX f_{\tilde{a}}= \omega(\XX,\XX_{\tilde{a}})$. This says that $\XX_{\tilde{a}}$ is the symplectic gradient of $f_{\tilde{a}}$.

Recall also that $\XX_{\tilde{a}}(q)=[\tilde{a},q]$. Now given $\tilde{a},\tilde{b}\in\mathfrak{U}(\theta)$, we want to find the value of $\omega(\XX_{\tilde{a}},\XX_{\tilde{b}})$. at $q=\left(\begin{array}{cc} 1 & 0 \\ 0 & 0 \end{array}\right)$, and correspondingly $\xx=\ee_1\in\k$. Since
$$
 X_{\tilde{a}}=[\tilde{a},q]=\left(\begin{array}{cc} 0 & -\alpha \\ \alpha^* & 0 \end{array}\right) \ , \ \ X_{\tilde{b}}=[\tilde{b},q]=\left(\begin{array}{cc} 0 & -\beta \\ \beta^* & 0 \end{array}\right)
$$
for some $\alpha,\beta\in\a$, then
$$
\langle\XX_{\tilde{a}},\XX_{\tilde{b}}\rangle_q=\theta\big( \left(\begin{array}{c} 0 \\ \alpha^*\end{array} \right), \left(\begin{array}{c} 0 \\ \beta^*\end{array} \right)\Big)=-\alpha\beta^*
$$
and 
$$
\omega(\XX_{\tilde{a}},\XX_{\tilde{b}})_q=Im \ \langle\XX_{\tilde{a}},\XX_{\tilde{b}}\rangle_q=\frac{1}{2i}(\beta\alpha^*-\alpha\beta^*).
$$
We want to establish the relationship between $\omega(\XX_{\tilde{a}},\XX_{\tilde{b}})$ and $f_{[\tilde{a},\tilde{b}]}$. To this effect, observe first that 
$$
\theta(\ee_1,[\tilde{a},\tilde{b}]\ee_1)=-\theta(\tilde{a}\ee_1,\tilde{b}\ee_1)+\theta(\tilde{b}\ee_1,\tilde{a}\ee_1)=-\theta\big(\left(\begin{array}{c} a_{11} \\ \alpha^* \end{array}\right), \left(\begin{array}{c} b_{11} \\ \beta^* \end{array}\right)\big)+ \theta\big(\left(\begin{array}{c} b_{11} \\ \beta^* \end{array}\right)\left(\begin{array}{c} a_{11} \\ \alpha^* \end{array}\right)\big)
$$
$$
\alpha\beta^*-\beta\alpha^*+ a_{11}b_{11}-b_{11}a_{11}.
$$
Then $\omega(\XX_{\tilde{a}},\XX_{\tilde{b}})=\frac{1}{2i}(\beta\alpha^*-\alpha\beta^*)$ and $f_{[\tilde{a},\tilde{b}]}=\frac{1}{2i}(\alpha\beta^*-\beta\alpha^*+a_{11}b_{11}-b_{11}a_{11})$. So
$$
\omega(\XX_{\tilde{a}},\XX_{\tilde{b}})=-f_{[\tilde{a},\tilde{b}]}+\frac{1}{2i}(a_{11}b_{11}-b_{11}a_{11}).
$$
Finally, $f_{\tilde{a}}(q)=\frac{1}{2i}\theta(\ee_1,\tilde{a}\ee_1)=\frac{1}{2i}a_{11}$, and
\begin{equation}\label{omega}
\omega(\XX_{\tilde{a}},\XX_{\tilde{b}})=-f_{[\tilde{a},\tilde{b}]}+2i [f_{\tilde{a}},f_{\tilde{b}}].
\end{equation}
This equality holds at $q=\left(\begin{array}{cc} 1 & 0 \\ 0 & 0 \end{array}\right)$, but a simple argument shows that (\ref{omega}) holds at every $q\in\q_\rho$. Indeed, each of  the terms involved is a function in $q\in\q_\rho$ with $\tilde{a},\tilde{b}\in\mathfrak{U}(\theta)$ fixed. If we change $q$ by $\tilde{m}q\tilde{m}^{-1}$ for $\tilde{m}\in\u(\theta)$, the values change by an inner automorphism. Finally, since $\u(\theta)$ acts transitively on $\q_\rho$, our claim is proven.

Given two sections $f,g$ with their respective symplectic gradients $\XX_f$ and $\XX_g$, the {\it Poisson bracket} $\{f,g\}$ is defined as
$$
\{f,g\}=\omega(\XX_f,\XX_g).
$$
Then,  we get 
$$
\{f_{\tilde{a}},f_{\tilde{b}}\}=-f_{[\tilde{a},\tilde{b}]}+2i [f_{\tilde{a}},f_{\tilde{b}}].
$$
The term $2i [f_{\tilde{a}},f_{\tilde{b}}]$ occurs because of the non-commutativity of the C$^*$-algebras $\c_q$, where the moment map takes its values.

Summarizing the facts of the last sections: 

\begin{teo}\label{teorema momentos}
Consider the manifold $\q_\rho$, with the tautological bundle $\e$ and the coefficient bundle $\c$. Then
\begin{enumerate}
\item
There exist invariant connections in $\e$ and $\c$, linked by Leibnitz' rule (see (\ref{conexion fibrado coeficientes})).
\item
There exists a Hilbertian product in $T\q_\rho$, with values in $\c$. This product is compatible with the right $\c$-module structure of $T\q_\rho$.
\item
The imaginary part $\omega$ of the Hilbertian product in $T\q_\rho$ is the curvature of the tautological connection of $\e$. In this sense, the symplectic form $\omega$ is {\it exact}.
\item
The map $f_{\tilde{a}}$ ($\tilde{a}\in\mathfrak{U}(\theta)$) is a  moment map: the field $\x_{\tilde{a}}$ is   {\it symplectic gradient}  of the function (cross section for $\c$)  $f_{\tilde{a}}$. Here  gradients are computed using the covariant derivative.
\end{enumerate}
\end{teo}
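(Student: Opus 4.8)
The statement is a summary of the constructions of Sections 3--7, so the plan is to verify that each of the four clauses is realized by an object already built, and to supply the one compatibility that has not yet been checked in detail, namely $\u(\theta)$-invariance.

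For clause (1) I would take the tautological connection $D_\XX\sigma=q(\XX\bullet\sigma)$ on $\e$ and define the connection on $\c$ by the Leibnitz rule (\ref{conexion fibrado coeficientes}); this construction is, by its very form, the asserted link between the two, and the referential expression (\ref{conexion fibrado coeficientes bis}) serves as a concrete witness. What remains to check is invariance under $\u(\theta)$: since $R(q)$ and the idempotent $q$ transform equivariantly under conjugation $q\mapsto\tilde{m}q\tilde{m}^{-1}$, and $\tilde{m}$ acts as an $\a$-module isomorphism $R(q)\to R(\tilde{m}q\tilde{m}^{-1})$, the directional derivative and the projection $q(\ \cdot\ )$ are intertwined by $\tilde{m}$, giving invariance of both connections and hence of the Leibnitz link.

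For clause (2) I would point to Definition \ref{prod int} for the existence of the $\c$-valued product, to item 2 of the remark following it for well-definedness (the change of basis $\xx\mapsto\xx u$ conjugates $\theta(\kappa_\xx(\XX),\kappa_\xx(\YY))$ by $u$), and to item 3 of the same remark for the module-compatibility $\langle\XX,\YY\cdot\varphi\rangle_q=\langle\XX,\YY\rangle_q\cdot\varphi$. Clause (3) is exactly Theorem \ref{61}, obtained by comparing the curvature formula (\ref{curvatura}) with the imaginary part of the Hilbertian product. Clause (4) follows from the two computations in Section 7: the covariant derivative $D_\YY f_{\tilde{a}}=[(\xx, Im\,\theta(\YY\xx,\tilde{a}\xx))]$ and the symplectic pairing $\omega(\XX_{\tilde{a}},\YY)_q=[(\xx, -Im\,\theta(\tilde{a}\xx,\YY\xx))]$ agree, since $\theta(\tilde{a}\xx,\YY\xx)^*=\theta(\YY\xx,\tilde{a}\xx)$; by the definition of symplectic gradient this says precisely that $\XX_{\tilde{a}}$ is the symplectic gradient of $f_{\tilde{a}}$.

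The bulk of the mathematical content having been established earlier, the only genuine work left for this theorem is the $\u(\theta)$-invariance assertions in clauses (1) and (2); I expect the equivariance of the tautological connection to be the delicate point, since one must verify that conjugation by a group element commutes with the fibrewise projection $q(\ \cdot\ )$ that defines $D_\XX$ on $\e$. Everything else is the assembly of results already in hand.
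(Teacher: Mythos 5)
Your proposal is correct and matches the paper exactly: the theorem is stated there as a summary (``Summarizing the facts of the last sections''), with clause (1) realized by the connections of Section 3 via (\ref{conexion fibrado coeficientes}), clause (2) by Definition \ref{prod int} and the remark following it, clause (3) by Theorem \ref{61}, and clause (4) by the Section 7 computation comparing $D_\YY f_{\tilde{a}}$ with $\omega(\XX_{\tilde{a}},\YY)$ via $\theta(\tilde{a}\xx,\YY\xx)^*=\theta(\YY\xx,\tilde{a}\xx)$. Your additional verification of $\u(\theta)$-invariance (which the paper asserts but does not spell out, relying on everything being built from the $\theta$-structure that $\u(\theta)$ preserves) is a sound and welcome supplement, not a departure.
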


\section{The invariant Finsler structure}
The homogeneous space $\q_\rho$ studied in \cite{cpr} has an invariant Finsler structure, i.e., a  continuous $\u(\theta)$-invariant distribution of norms in the tangent spaces. If $q\in\q_\rho$ and $\XX\in(T\q_\rho)_q$, define the norm 
$$
\|X\|_q=\||2q-I|^{-1/2}\XX|2q-I|^{1/2}\|.
$$
We recall that it is precisely this structure, which when translated to the disk $\d$, gives the Poincar\'e metric of the disk \cite{tejemas}. In this subsection we shall see that the Hilbertian product just defined, induces also in a natural way the same Finsler structure. To prove this fact, we must first indicate how to compute the norm of a given endomorphism $\varphi\in\ele_\a(R(q))$.
\begin{defi}\label{norma de phi}
Let $\varphi\in\ele_\a(R(q))$. Then
$$
|\varphi|:=\sup_{0\ne\yy\in R(q)}\displaystyle{\frac{\|\theta(\varphi(\yy),\varphi(\yy))\|^{1/2}}{\|\theta(\yy,\yy)\|^{1/2}}}.
$$
\end{defi}
Alternatively, $\theta$ is a C$^*$-Hilbert module (positively) inner product in $R(q)$. Thus, the above formula is just the usual way to compute the norm of an endomorphism, when the module $R(q)$ is endowed with the C$^*$-module norm.

We show that, in the presence of a basis $\xx\in\k$ of $R(q)$ and a matrix $a\in\a$ for $\varphi$, the norm of the endomorphism is the norm of the matrix.
\begin{lem}
If $\varphi=[(\xx,a)]$, then $|\varphi|=\|a\|$.
\end{lem}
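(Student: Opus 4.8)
The plan is to compute $|\varphi|$ directly from Definition~\ref{norma de phi} by exploiting that $\xx\in\k$ is a basis of the module $R(q)$ with $q=p_\xx$. Since every $\yy\in R(q)$ is of the form $\yy=\xx c$ for a unique $c\in\a$ (recall that $\{\xx\}$ is a basis of $R(q)$ as a right $\a$-module), and since $\theta(\xx,\xx)=\xx^*\rho\xx=1$ for $\xx\in\k$, I would first record the two basic identities
$$
\theta(\yy,\yy)=\theta(\xx c,\xx c)=c^*\theta(\xx,\xx)c=c^*c,
$$
and, using $\varphi(\yy)=\varphi(\xx c)=\varphi(\xx)c=\xx a c$ (because $a$ is the matrix of $\varphi$ in the basis $\xx$),
$$
\theta(\varphi(\yy),\varphi(\yy))=\theta(\xx ac,\xx ac)=(ac)^*\theta(\xx,\xx)(ac)=c^*a^*ac.
$$

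\textbf{Reducing to a norm identity in $\a$.}
Substituting these into the supremum, the claim $|\varphi|=\|a\|$ becomes the purely C$^*$-algebraic statement
$$
\sup_{0\ne c\in\a}\frac{\|c^*a^*ac\|^{1/2}}{\|c^*c\|^{1/2}}=\|a\|.
$$
I would prove this by a two-sided estimate. For the upper bound, positivity gives $c^*a^*ac\le \|a^*a\|\,c^*c=\|a\|^2 c^*c$ as elements of $\a$, so taking norms and using that $0\le x\le y$ implies $\|x\|\le\|y\|$ yields $\|c^*a^*ac\|\le\|a\|^2\|c^*c\|$; taking square roots and dividing bounds every quotient by $\|a\|$. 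For the reverse inequality, the cleanest route is to test with $c=1$ (allowed since $\a$ is unital and $\yy=\xx\cdot 1=\xx\ne 0$), which gives the quotient $\|a^*a\|^{1/2}/\|1\|^{1/2}=\|a\|$ exactly; hence the supremum is attained and equals $\|a\|$.

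\textbf{Main obstacle and well-definedness.}
The computation itself is routine once the module structure is unwound; the only point requiring a little care is that the quantity $|\varphi|$ is intrinsic to the endomorphism $\varphi$ and must not depend on the choice of basis $\xx$ or on the matrix representative $a$. This is guaranteed by the equivalence relation~(\ref{relacion x u}): changing $\xx$ to $\xx u$ replaces $a$ by $u^*au$, and since $u\in\u_\a$ is unitary we have $\|u^*au\|=\|a\|$, so the value $\|a\|$ is well defined. In fact Definition~\ref{norma de phi} makes $|\varphi|$ manifestly basis-independent from the start, so the content of the lemma is precisely that this intrinsic norm equals the norm of any matrix representing $\varphi$. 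I expect no genuine difficulty here; the testing value $c=1$ relies on $\a$ being unital, which is assumed throughout, and the supremum need not otherwise be analyzed since it is already attained at $c=1$.
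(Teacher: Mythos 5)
Your proposal is correct and follows essentially the same route as the paper: unwind the module structure via $\yy=\xx c$, reduce to a supremum of quotients in $\a$, bound it above, and attain it by testing with the basis vector itself ($c=1$). The only cosmetic difference is that the paper gets the upper bound from the C$^*$-identity $\|c^*a^*ac\|^{1/2}=\|ac\|$ together with submultiplicativity, whereas you use the operator inequality $c^*a^*ac\le\|a\|^2c^*c$ and monotonicity of the norm on positive elements --- an equivalent step.
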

\begin{proof}
Let $\yy=\xx b\ne 0$ in $R(q)$. Then $\varphi(\yy)=\xx ab$. Thus
$$
\displaystyle{\frac{\|\theta(\varphi(\yy),\varphi(\yy))\|^{1/2}}{\|\theta(\yy,\yy)\|^{1/2}}}=\displaystyle{\frac{\|\theta(\xx ab,\xx ab)\|^{1/2}}{\|\theta(\xx b,\xx b)\|^{1/2}}}=\displaystyle{\frac{\|b^*a^*ab\|^{1/2}}{\|b^*b\|^{1/2}}}=\displaystyle{\frac{\|ab\|}{\|b\|}}\le \|a\|.
$$
On the other hand, taking $\yy=\xx$, one gets $\displaystyle{\frac{\|\theta(\varphi(\xx),\varphi(\xx))\|^{1/2}}{\|\theta(\xx,\xx)\|^{1/2}}}=\|a\|$.
\end{proof}
\begin{teo}
Let $q\in\q_\rho$ and $\XX\in(T\q_\rho)_q$. Then, with the above definition \ref{norma de phi}, one has
$$
|\langle \XX,\XX\rangle_q|=\|\XX\|_q.
$$
\end{teo}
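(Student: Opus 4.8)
The plan is to reduce the identity to the base point $p=\left(\begin{array}{cc} 1 & 0 \\ 0 & 0 \end{array}\right)$, where both members can be computed by hand, and to transport the general case back to it using the $\u(\theta)$-action. First I would unwind the left-hand side. By the Lemma immediately preceding the statement, the norm of an endomorphism equals the $\a$-norm of its matrix in any basis; applied to $\langle\XX,\XX\rangle_q=[(\xx,-\theta(\XX\xx,\XX\xx))]$ this gives $|\langle\XX,\XX\rangle_q|=\|\theta(\XX\xx,\XX\xx)\|$ for any $\xx\in\k$ with $p_\xx=q$. Here $\XX\xx=\kappa_\xx(\XX)\in N(q)$, and since $\theta$ is negative semidefinite on $N(q)$ the element $-\theta(\XX\xx,\XX\xx)$ is positive, so this norm is unambiguous.

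Next I would use that both members are $\u(\theta)$-invariant: the Finsler norm $\|\cdot\|_q$ by its very construction in \cite{tejemas}, and $|\langle\XX,\XX\rangle_q|$ because the Hilbertian product is $\u(\theta)$-invariant (Section 5) and $\u(\theta)$ acts on each fibre $\c_q$ by $\ast$-isomorphisms, which are isometric for the norm of Definition \ref{norma de phi}. Since $\u(\theta)$ acts transitively on $\q_\rho$ — indeed $q=\lambda_q p\lambda_q^{-1}$ with $\lambda_q\in\u(\theta)\cap Gl_2(\a)^+$ — it suffices to prove the equality at $q=p$. There $\lambda_p=I$, the distinguished basis is ${\bf sr}(p)=\ee_1$, and every $\XX\in(T\q_\rho)_p$ has the off-diagonal form $\XX=\left(\begin{array}{cc} 0 & b \\ -b^* & 0 \end{array}\right)$ recorded in Section 4. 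A one-line computation then gives $\XX\ee_1=\left(\begin{array}{c} 0 \\ -b^* \end{array}\right)$ and $-\theta(\XX\ee_1,\XX\ee_1)=bb^*$, while $|2p-I|=|\rho|=I$ makes $\|\XX\|_p$ the plain operator norm of $\XX$, read off from $\XX^*\XX=\left(\begin{array}{cc} bb^* & 0 \\ 0 & b^*b \end{array}\right)$. Comparing the two sides at $p$ shows that the Finsler norm $\|\XX\|_q$ coincides with the norm induced by the Hilbertian product, which is the content of the theorem.

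The step I expect to be delicate is the reduction, not the base-point arithmetic: one must make sure that the transport of a tangent vector from $q$ to $p$ used to invoke invariance of the Finsler norm is the very same conjugation (by the element of $\u(\theta)$ realising $q=\lambda_q p\lambda_q^{-1}$) under which the fibrewise norm of $\langle\cdot,\cdot\rangle$ is invariant, so that both members genuinely collapse to the same quantity at $p$. A purely computational alternative avoids this bookkeeping: taking $\xx=\lambda_q\ee_1={\bf sr}(q)$ and using $\XX^*\rho=\rho\XX$ together with $\lambda_q\rho=\rho\lambda_q^{-1}$, one rewrites $\theta(\XX\xx,\XX\xx)=\ee_1^*\rho\,(\lambda_q^{-1}\XX\lambda_q)^2\,\ee_1$; since $\lambda_q^{-1}\XX\lambda_q\in(T\q_\rho)_p$ has the off-diagonal form above, evaluating the $(1,1)$ entry returns precisely the base-point expression and closes the argument.
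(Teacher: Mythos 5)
Your proposal is correct and takes essentially the same route as the paper's proof: reduce by $\u(\theta)$-invariance of both norm distributions to the base point $q=p$, use the lemma $|[(\xx,a)]|=\|a\|$, and compute there that $-\theta(\XX\ee_1,\XX\ee_1)=bb^*$ while $\|\XX\|_p$ is the plain operator norm, both sides collapsing to $\|b\|$ via the C$^*$-identity. The extra care you take about the transport bookkeeping (and your alternative direct computation with $\xx=\lambda_q\ee_1$) is a sound way to justify the invariance step that the paper simply asserts, but it does not change the argument.
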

\begin{proof}
Since both distributions of norms are $\u(\theta)$-invariant, it suffices to consider the case $q=p=\frac12 (\rho+I)=\left(\begin{array}{cc} 1 & 0 \\ 0 & 0 \end{array}\right)$.  For  the range of this projection we can choose the basis $\ee_1$. If $\XX\in(T\q_\rho)_{p}$ then it  is  anti-Hermitian and co-diagonal, i.e.,  $\XX=\left(\begin{array}{cc} 0 & x_{12} \\ -x_{12}^* & 0 \end{array}\right)$. Then
$$
-\theta(\kappa_{\ee_1}(\XX),\kappa_{\ee_1}(\XX))=-\left(\begin{array}{cc} 1 & 0 \end{array}\right)\left(\begin{array}{cc} 0 & x_{12}\\ -x_{12}^* & 0 \end{array}\right) \rho \left(\begin{array}{cc} 0 & x_{12}\\ -x_{12}^* & 0 \end{array}\right) \left(\begin{array}{c} 1 \\  0 \end{array}\right)=x_{12}x_{12}^*.
$$
Therefore, by the above lemma, $|\langle\XX,\XX\rangle_{p}|=\|x_{12}x_{12}^*\|^{1/2}=\|x_{12}\|$.

On the other hand, if $q=p$, then clearly $\|\ \|_{p}$ is the usual norm in $M_2(\a)$. Then
$$
\|\XX\|_{p}=\|\XX\|=\|\XX^*\XX\|^{1/2}=\|\left(\begin{array}{cc} 0 & x_{12}\\ -x_{12}^* & 0 \end{array}\right)^2\|^{1/2}=\|\left(\begin{array}{cc} x_{12}x_{12}^* & 0 \\ 0  & x_{12}^*x_{12} \end{array}\right)\|^{1/2}
$$
$$
=\max\{\|x_{12}x_{12}^*\|^{1/2}, \|x_{12}^*x_{12}\|^{1/2}\}=\|x_{12}\|.
$$
\end{proof}

\section{The scalar case}
In this section we shall consider the classical case when $\a=\mathbb{C}$. We shall check that the geometry induced by $\a_\rho$ on $\d$ is the classical hyperbolic geometry of the Poincar\'e disk. In the scalar case, or more generally, when $\a$ is commutative, the coefficient bundle $\c$ consists of $\a$ in each fiber: the Hilbertian product, the moment map,  and so forth, take values in $\a$. Indeed, if $\varphi$ is an endomorphism of $R(q)$, with bases $\xx$ and $\xx u$, then the corresponding   matrices of $\varphi$ in these bases are
$$
a \ \hbox{ and } uau^*=a.
$$
That is, the coefficient bundle $\c$ ends up being $\a$. 

In particular, in the case $\a=\mathbb{C}$, one obtains a complex structure for the unit disk $\d$. The goal of this section is to show that this structure is the classical complex structure of the Poincar\'e disk.

We first recall the isomorphism 
$$
\Phi_\d:\d\to\q_\rho.
$$
In the scalar case we get
$$
\d\ni z \mapsto p_z=\displaystyle{\frac{1}{1-|z|^2}} \left(\begin{array}{cc} 1 & -\bar{z} \\ z & -|z|^2 \end{array}\right).
$$
The tangent spaces $(T\d)_x$ identify with $\mathbb{C}$. Given $a\in\mathbb{C}$, regarded as a tangent vector in $T(\d)_z$, let us denote by $\XX_a=(d\Phi_\d)_z(a)$ the corresponding tangent vector in $T(\q_\rho)_{p_z}$. Clearly,  one gets
$$
\XX_a=\displaystyle{\frac{1}{(1-|z|^2)^2}}\left( \begin{array}{cc} \bar{a}z+a\bar{z} & -\bar{a}-a \bar{z}^2 \\ a+\bar{a}z^2 & -\bar{a}z-a\bar{z} \end{array} \right). 
$$
\subsection{The complex inner product}
For the module $R(p_z)$, we chose  the basis $\xx_z=\displaystyle{\frac{1}{(1-|z|^2)^{1/2}}}\left(\begin{array}{c} 1 \\ z \end{array} \right)$. Thus, the lifting $\kappa_{\xx_z}(\XX_a)$ is given by
$$
\XX_a\xx_z=\displaystyle{\frac{1}{(1-|z|^2)^{5/2}}}\left( \begin{array}{cc} \bar{a}z+a\bar{z} & -\bar{a}-a\bar{z}^2 \\ a+\bar{a}z^2 & -\bar{a}z-a\bar{z} \end{array} \right) \left(\begin{array}{c} 1 \\ z \end{array} \right)=\displaystyle{\frac{1}{(1-|z|^2)^{3/2}}}\left(\begin{array}{c} a\bar{z} \\ a \end{array} \right).
$$
Therefore, if $z\in\d$ and  $a,b\in\mathbb{C}$ ($=(T\d)_z$)
\begin{equation}\label{producto caso escalar}
\langle a,b\rangle_z=-\theta(\XX_a\xx_z,\XX_b\xx_z)=-\displaystyle{\frac{1}{(1-|z|^2)^{3/2}}}\theta(\left(\begin{array}{c} a\bar{z} \\ a \end{array} \right),\left(\begin{array}{c} b\bar{z} \\ b \end{array} \right))=
\displaystyle{\frac{\bar{a}b}{(1-|z|^2)^{2}}},
\end{equation}
which is the classical complex inner product in the Poincar\'e disk.

\subsection{The linear connection}
Let $\XX$ be a tangent field defined on a neighbourhood of $q$ in $\q_\rho$, and $\YY\in(T\q_\rho)_q$. Let $q(t)$ be a smooth curve adapted to $\YY$: $q(0)=q$ and $\dot{q}(0)=\YY$. Then the covariant derivative in $\q_\rho$ is given by \cite{cpr}
$$
\nabla_\YY \XX_q=\frac{d}{dt}\XX_{q(t)}|_{t=0}+[\XX_q,[\YY,q]].
$$

Let now $a=a(z)$ be a $\mathbb{C}$-valued smooth map defined on a neighbourhood of $z_0\in\d$, regarded as a tangent vector field in $\d$, and $b\in\mathbb{C}$ a tangent vector at $z_0$. To compute $\nabla_ba_{z_0}$, we have to compute
$\nabla_{\XX_b}\XX_a$ at $z=z_0$, and identify this tangent vector as a matrix $\XX_c$ for certain $c\in\mathbb{C}$ (at $z$!), and then $c=\nabla_b a_{z_0}$.
In order to simplify this conputation, we shall consider the case $z_0=0$. In fact, due to the invariance of the linear connection under the action of $\u(\theta)$, this will suffice to identify the covariant derivative.
In this case ($z_0=0$), we have
$$
\XX_a(z)=\displaystyle{\frac{1}{(1-|z|^2)^2}} \left( \begin{array}{cc} \bar{a}(z)z+a(z)\bar{z} & -\bar{a}(z)-a(z)\bar{z}^2 \\ a(z)+\bar{a}(z)z^2 & -\bar{a}(z)z-a(z)\bar{z} \end{array} \right)\ , \  \XX_b=\left(\begin{array}{cc} 0 & -\bar{b} \\ b & 0 \end{array}\right).
$$
Applying the formula above and choosing a smooth $z(t)\in\d$ such that $z(0)=0$ and $\dot{z}(0)=b$ (for instance $z(t)=tb$),  after straightforward computations one obtains
$$
\nabla_{X_a} (X_{b})_{E_1}=\left( \begin{array}{cc} 0 & -\frac{\partial}{\partial b} \bar{a}(0) \\ \frac{\partial}{\partial b} a(0) & 0\end{array}\right)=X_{\frac{\partial}{\partial b} a(0)}\ \ \hbox{ at } z=0.
$$
Thus, 
\begin{equation}\label{conexion escalar}
\nabla_b a_{0}=\frac{\partial}{\partial b} a(0).
\end{equation}
This coincides with the Levi-Civita connection of the classical metric of the Poincar\'e disk (at the origin).
\subsection{The moment map}
Note that the Lie algebra $\mathfrak{U}(\theta)$ is given in this case by all matrices of the form
$$
{\tilde{a}}=\left(\begin{array}{cc} i\alpha & \omega \\ \bar{\omega} & i\beta \end{array}\right)= i\left(\begin{array}{cc} \alpha & 0 \\ 0 & \beta \end{array}\right)+\left(\begin{array}{cc} 0 & \omega \\ \bar{\omega} & 0 \end{array}\right) \  , \  \alpha,\beta\in\mathbb{R} , \ \omega\in\mathbb{C}.
$$
Recall that if $q=p_\xx\in\q_\rho$ and ${\tilde{a}}\in \mathfrak{U}(\theta)$, then the moment map is given by
$$
f_{\tilde{a}}(q))=\frac{1}{2i}\theta(\xx,{\tilde{a}}\xx).
$$
If $z\in\d$ and  $q=p_z$,  we choose as above the basis $\displaystyle{\frac{1}{(1-|z|^2)^{1/2}}}\left(\begin{array}{c} 1 \\ z \end{array}\right)$, and ${\tilde{a}}$ is given as above, then
$$
f_{\tilde{a}}(z))=f_{\tilde{a}}(q)=\displaystyle{\frac{1}{1-|z|^2}}\left\{\frac12 \theta(\left(\begin{array}{c} 1 \\ z \end{array}\right), \left(\begin{array}{cc} \alpha & 0  \\ 0 & \beta \end{array}\right)\left(\begin{array}{c} 1 \\ z \end{array}\right))+\frac{1}{2i}\theta(\left(\begin{array}{c} 1 \\ z \end{array}\right), \left(\begin{array}{cc} 0 & \omega \\ \bar{\omega} & 0  \end{array}\right)\left(\begin{array}{c} 1 \\ z \end{array}\right)\right\} 
$$
\begin{equation}\label{momento escalar}
=\displaystyle{\frac{1}{1-|z|^2}}\left( \frac12(\alpha-\beta|z|^2)+\frac{1}{2i}(\omega z- \bar{\omega}\bar{z})\right).
\end{equation}

\subsection{Commutative C$^*$-algebras}
If $\a$ is commutative (i.e., $\a=C(\Omega,\mathbb{C})$ for some compact Hausdorf space $\Omega$), the coefficient bundle also reduces to $\a$, as remarked at the beginning of this section. Moreover, it is clear that the computations done in this section can be carried over exactly  in the same way. Thus, one gets a near classical situation, in which the Hilbertian product, the metric and the moment map take values in $\a$, and the formulas look the same as in the scalar case, replacing complex numbers by continuous functions.

\section{Valuations}

In this section we introduce valuation maps. Once a valuation map onto a commutative C$^*$-algebra   is chosen, the non commutative K\"ahler structure becomes  a classical K\"ahler structure: measurements take values in  a fixed scalar field,  instead of being elements of the coefficient bundle $\c$. Most important, valuation maps will allow us to examine the convexity properties of the moment map.

In what follows, $\f$ denotes a commutative C$^*$-algebra. 
\begin{defi}
A {\it valuation} $\nu$ in $\q_\rho$ is a  differentiable map $\nu:\c\to \f$ with the following properties:
\begin{enumerate}
\item 
$\nu$ is positive in the following sense: for any $q\in\q_\rho$, $\nu|_{\c_q}:\c_q\to \f$ is a positive linear map between C$^*$-algebras. In particular, this implies that $\nu|_{\c_q}$ is bounded.
\item
$\nu$ is tracial: for any $q\in\q_\rho$ and $a,b\in\c_q$, $\nu(ab)=\nu(ba)$.

{\rm Additionally, we say that $\nu$ is {\it faithful} if }
\item
for any $q\in\q_\rho$ and $a\in\c_q$, $\nu(a^*a)=0$ implies $a=0$.
\end{enumerate}
\end{defi}
Let us introduce the following examples, which show that the existence of $\nu$ is not an unlikely event.
\begin{ejems}

\noindent

\begin{enumerate}
\item
If the base algebra $\a$ admits a trace $\tau$ with values in a commutative subalgebra $\f\subset \a$, then naturally a valuation $\nu$ is defined: $\nu(\varphi)=\tau(a)$, where $a$ is the matrix of $\varphi$ in $\xx$, for any $q\in\c_q$.
\item
$\a$ need not admit a trace. Suppose that there exists a $*$-homomorphism onto a commutative algebra $\pi:\a\to \f$ (here $\f$ need not be a subalgebra of $\a$). In this case, $\pi(ab)=\pi(a)\pi(b)=\pi(b)\pi(a)=\pi(ba)$.  One such example is the Toeplitz C$^*$-algebra $\t(C(\mathbb{T}))=\{T_f: f\in C(\mathbb{T})\}$, where $T_f$ denotes the Toeplitz operator with symbol $f$. $\t(C(\mathbb{T}))$ does not admit a trace, but it has a $*$-homomorphism onto a commutative algebra, namely 
$$
\pi:\t(C(\mathbb{T}))\to \t(C(\mathbb{T}))/\k(L^2(\mathbb{T}))\simeq C(\mathbb{T}).
$$
Another example of this sort is the algebra $\d+\k=\{D+K: D \hbox{ diagonal  and } K \hbox{ compact}\}\subset\b(\ell^2).$
In this case, there is a homomorphism
$$
\pi: \d+\k\to \d+\k/\k(\ell^2)\simeq \ell^\infty/c_0.
$$
\item
A third sort of example is obtained if $\a$ has a positive tracial map onto a  commutative algebra.
\end{enumerate}
\end{ejems}

Let us fix  a valuation $\nu:\c\to \f$.

The structures that we have defined in $\q_\rho$, with values in $\c$, have valuations which transform them in structures with values in $\f$. The main quantity is the $\c$-valued Hilbertian product: if $q=p_\xx$,
$$
\langle\XX,\YY\rangle_q=-\theta(\kappa_\xx(\XX),\kappa_\yy(\YY))=-\theta(\XX\xx,\YY\yy)\in\c_q,
$$
and applying $\nu$
\begin{equation}\label{producto nu}
\langle \XX, \YY  \rangle_q^\nu=-\nu\theta(\XX,\xx,\YY\yy).
\end{equation}
This  $\f$-valued inner product defines a Hermitian structure in $\q_\rho$, where the scalar "field" is $\f$. Note that if $\nu$ is not faithful, $\langle\ ,\  \rangle^\nu$ is positive semi-definite.

Next, we consider the connection and curvatures, and the symplectic form. We saw in Section 6 that the canonical connection in the tautological bundle $\xi\to \q_\rho$ has curvature equal to the imaginary part of the Hilbertian product (both terms in this assertion, considered as $2$-forms in $\q_\rho$ with values in $\c$).

Let us consider now the following: pick a field of bases $\xx\in\k$ defined on an open set  and a cross section $\sigma$ of $\xi$ on this open set. We can write
$$
D_\XX\sigma=\xx(\XX\cdot a+\alpha(\XX)a)
$$
where $\sigma=\xx a$,  $a$ is an $\a$-valued function,  $\XX\cdot a$ is the directional derivative of $a$ in the direction $\XX$, and  $\alpha(\XX)$ is an $\a$-valued  $1$-form in $\q_\rho$ (what we called  the $1$-form of $\q_\rho$ in the basis $\xx$) . If we compute
$$
D_\XX D_\YY\sigma-D_\YY D_\XX\sigma-D_{[\XX,\YY]}\sigma
$$
for fields $\XX,\YY$ in $(T\q_\rho)_q$, we obtain  the following expression for the curvature $R(\XX,\YY)\sigma$:
$$
R(\XX,\YY)\sigma=\xx((d\alpha)(\XX,\YY)a+[\alpha(\XX),\alpha(\YY)] a),
$$
where $[\alpha(\XX),\alpha(\YY)]=\alpha(\XX)\alpha(\YY)- \alpha(\YY)\alpha(\XX)$.
On the other hand, we saw in Theorem \ref{61} that 
$$
-\frac{1}{2i} R(\XX,\YY)_q=Im \ \langle\XX,\YY\rangle_q,
$$
where both terms are $\c$-valued, and $\XX,\YY\in(T\q_\rho)_q$. Applying the valuation $\nu$ we get
$$
\nu(R(\XX,\YY)_q)=\nu(d\alpha(\XX,\YY)+[\alpha(\XX),\alpha(\YY)])=\nu d\alpha(\XX,\YY).
$$
Note that $\nu(\XX\cdot \alpha(\YY))=\XX\cdot d\alpha(\YY)$, because $\nu$ is linear and  bounded (it commutes with the derivatives). Hence
$$
\nu \  d\alpha(\XX,\YY)=\nu(\XX\cdot \alpha(\YY)-\YY\alpha(\XX)-\alpha([\XX,\YY]= \XX \cdot \nu\alpha(\YY) - \YY\cdot \nu\alpha(\XX)-\nu\alpha([\XX,\YY]).
$$
In other words: $\nu\  d\alpha(\XX,\YY)=d\ \nu\alpha (\XX,\YY)$. Therefore
$$
Im \  \langle(\XX,\YY\rangle_q^\nu=\nu \ Im \langle(\XX,\YY\rangle_q=-\frac{i}{2i} \nu R(\XX,\YY)_q=-\frac{1}{2i} \nu \ d\alpha(\XX,\YY)=-\frac{1}{2i} d\ \nu\alpha(\XX,\YY).
$$
Thus, recalling that $\omega(\XX,\YY)=Im \ \langle\XX,\YY \rangle$, we have that the alternate $\f$-valued $2$-form $\nu\omega$ over $\q_\rho$ satisfies the equality
\begin{equation}\label{nu omega}
\nu\omega(\XX,\YY)=-\frac{1}{2i} d\  \nu\alpha (\XX,\YY).
\end{equation}
We shall call $\nu\omega $ the {\it valuated symplectic form} over $\q_\rho$, with values in $\f$. Then we have that the valuated  symplectic  form $\nu\omega$ is {\it exact}. Indeed, there exist global bases $\xx$ defined in the whole $\q_\rho$.

Recall the moment map defined in Section 7: for $\tilde{a}\in\mathfrak{U}(\theta)$, $f_{\tilde{a}}:\q_\rho\to \c$, is defined by $f_{\tilde{a}}(q)=\frac{1}{2i}\Big[(\xx,\xx^*\rho\tilde{a}\xx)\Big]$. Recall also the equality
$D_\XX f_{\tilde{a}}=\omega(\XX,\XX_{\tilde{a}})$. Let us apply the valuation to the moment map
$$
f_{\tilde{a}}^\nu=\nu f_{\tilde{a}}:\q_\rho\to \f.
$$
Note that 
$$
\XX\cdot f_{\tilde{a}}^\nu=X\cdot \nu f_{\tilde{a}}=\nu D_\XX\cdot f_{\tilde{a}},
$$
Therefore $\XX\cdot f_{\tilde{a}}^\nu=\nu \omega(\XX,\XX_{\tilde{a}})$. This means that $f^\nu$ is a  moment map. Let us understand now the map $f^\nu$ as a map from $\q_\rho$ to the dual of $\mathfrak{U}(\theta)$. Here "dual" means the space of bounded  linear "functionals" with values in $\f$. To do this, we shall consider the following tracial functional :
$$
\tau:M_2(\a)\to \f \ , \ \ \tau\left( \begin{array}{cc} a_{11} & a_{12} \\  a_{21} & a_{22} \end{array} \right) =\frac12\nu(a_{11}+a_{22}).
$$
Apparently, $\tau$ is positive, $\tau(I)=1$  and verifies $\tau(\tilde{a}\tilde{b})=\tau(\tilde{b}\tilde{a})$.
Then, if $q=p_\xx$,  we have
$$
f_{\tilde{a}}^\nu(q)=\nu \theta(\xx,\tilde{a}\xx)=\nu(\xx^*\rho\tilde{a}\xx)=\tau(\xx^*\rho\tilde{a}\xx),
$$
where the last equality uses the fact that, on elements $a$ of $\a$ (regarded as scalar matrices $\left(\begin{array}{cc} a & 0 \\ 0 & a \end{array} \right)$), $\nu$ coincides with $\tau$. Then, using the trace property of $\tau$, $\tau(\xx^*\rho\tilde{a}\xx)=\tau(\xx\xx^*\rho\tilde{a})=\tau(q\tilde{a})$, i.e.,
\begin{equation}\label{tau momento}
f_{\tilde{a}}^\nu(q)=\tau(q\tilde{a}).
\end{equation}
This formula allows one to clearly identify (by means of $\nu$) the moment map as a map from $\q_\rho$ with values in the tangent space of the Lie algebra $\mathfrak{U}(\theta)$ of the group $\u(\theta)$: to $q\in\q_\rho$ corresponds the $\f$-valued  linear functional $\tau(q \ \cdot)$, with density matrix $q$.
\begin{rem}
The dual space considered is subordinated to the valuation $ \nu$. It consists of  bounded linear functionals defined on $\mathfrak{U}(\theta)$, with values in $\f$. If, additionally, the algebra of scalars $\f$  in which one chooses to take measurements is a subalgebra of $\a$, then these functionals $\tau(q\ \cdot)$ are also $\f$-linear.
\end{rem}

Next we shall discuss a property of the moment map, which mimicks the theorem of \cite{kostant}, \cite{atiyah} and \cite{guillemin} on compact symplectic manifolds acted by a torus.  We shall consider therefore a subgroup of the full group $\u(\theta)$ acting in $\q_\rho$, namely the diagonal group
$$
\d(\theta)=\{\left(\begin{array}{cc} u_1 & 0 \\ 0 & u_2 \end{array}\right) : u_i\in\u_\a\}\subset \u(\theta),
$$
which will play the role of a torus.
Note that $\d(\theta)=\u(\theta)\cap\u_2(\a)$, i.e., the unitary matrices in $M_2(\a)$ which preserve the form $\theta$.

In classical symplectic geometry, the restriction of the action to the subgroup induces a restricted moment map, which, when regarded as a map from the manifold to the dual of the Lie algebra of the acting group, consists in composing the moment of the full group with the projection of the Lie algebra of the full group onto the Lie algebra of the subgroup.

In our case, we shall restrict the action to $\d(\theta)$. Note that the Lie algebra of $\d(\theta)$ is the subalgebra $\mathfrak{U}_0(\theta)$ given in (\ref{mathfrak}) 
$$
\mathfrak{U}_0(\theta)=\{\left( \begin{array}{cc} a_1 & 0 \\ 0 & a_2 \end{array}\right): a_i^*=-a_i\}.
$$
The projection is given by
$$
\mathfrak{U}(\theta)\to\mathfrak{U}_0(\theta) \ , \ \ \left( \begin{array}{cc} a_1 & b \\ b^* & a_2 \end{array}\right)\mapsto \left( \begin{array}{cc} a_1 & 0 \\ 0 & a_2 \end{array}\right).
$$ 
The following  is an Atiyah-Guillemin-Sternberg's type of convexity result  \cite{kostant},\cite{atiyah}, \cite{guillemin}.
\begin{teo}
The image of   moment map $f^\nu$ of the restricted action of the group $\d(\theta)$, 
$$
\q_\rho\ni q\stackrel{f^\nu}{\longmapsto} \tau(q\ \cdot )\in\ele(\mathfrak{U}_0(\theta),\f),
$$
regarded as a subset of the  space $\ele(\mathfrak{U}_0(\theta),\f)$ of linear functionals $\mathfrak{U}_0(\theta)\to \f$, is a convex set.
\end{teo}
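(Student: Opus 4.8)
The plan is to reduce the statement to a concrete convexity assertion about the \emph{diagonal} of the density matrix $q$, and then to realize every convex combination of moment values by an actual point of $\q_\rho$.

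First I would use the identification (\ref{tau momento}), $f^\nu_{\tilde a}(q)=\tau(q\tilde a)$, together with the fact that the restricted moment map pairs $q$ only against the diagonal algebra $\mathfrak{U}_0(\theta)$. Writing $\tilde a=\left(\begin{array}{cc} a_1 & 0 \\ 0 & a_2\end{array}\right)$ with $a_i^*=-a_i$, one has $\tau(q\tilde a)=\frac12\nu(q_{11}a_1+q_{22}a_2)$, so the value of the restricted moment map at $q$ depends only on the two diagonal entries $q_{11},q_{22}$. Using the global parametrization $b\mapsto q_b$ of $\q_\rho$ from Section 2, these are $q_{11}=1+b^*b$ and $q_{22}=-bb^*$. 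Hence the restricted moment map factors as $q_b\mapsto\big(\nu((1+b^*b)\,\cdot\,),\,-\nu(bb^*\,\cdot\,)\big)$, an \emph{affine} function of the pair of positive densities $(1+b^*b,\,bb^*)$, and the image I must control is the set of such $\f$-valued functionals on $\mathfrak{U}_0(\theta)$ as $b$ runs over $\a$.

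To prove convexity I would show that every convex combination of two moment values is again attained. Given $b_0,b_1\in\a$ and $t\in[0,1]$, linearity of $\nu$ makes the target functional the one with densities $\big(1+tb_0^*b_0+(1-t)b_1^*b_1,\ tb_0b_0^*+(1-t)b_1b_1^*\big)$. The key simplification I would invoke comes from the tracial property of $\nu$ and the commutativity of $\f$: these force $\nu$ to identify the data carried by $b^*b$ and by $bb^*$, so that the two slots of the moment functional are not independent but are governed by a single $\f^+$-valued quantity attached to $b$. This is exactly the mechanism visible in the scalar computation (\ref{momento escalar}), where $f_{\tilde a}$ depends on $b$ only through $|z|^2$ and the image is a ray (the line $f_{\tilde a}(q)$ with $\tau(q)$ constant). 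Having collapsed the description to this single positive datum, I would exhibit the image of $\q_\rho$ as the affine image of the convex cone $\a^+=\{b^*b:b\in\a\}$ under $\nu$ (which lands in the convex cone $\f^+$ by positivity), and an affine image of a convex set is convex.

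The main obstacle is precisely this realization step. For a single $b_t$ one must match \emph{both} $b_t^*b_t$ and $b_tb_t^*$ simultaneously (through $\nu$) to the prescribed convex combinations, yet $b^*b$ and $bb^*$ cannot be prescribed independently; here is where the hypotheses on the valuation carry the whole argument. Positivity guarantees that $\nu$ preserves the relevant cones, while the tracial property together with the commutativity of the target algebra $\f$ is what collapses the a priori two-density picture to the single-parameter one and makes the convex combination attainable. To keep the bookkeeping light I would also exploit $\u(\theta)$-equivariance of $f^\nu$ and transitivity of the action, reducing all computations to the base point $q=p$, $\xx=\ee_1$, where the densities $1+b^*b$, $-bb^*$ and their convex combinations are most transparent.
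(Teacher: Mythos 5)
Your first move coincides with the paper's: restrict $\tau(q\,\tilde a)$ to diagonal $\tilde a$ and observe that the restricted moment functional of $q=q_b$ is $(a_1,a_2)\mapsto \frac12\nu\big((1+b^*b)a_1-bb^*a_2\big)$, so that everything hinges on the pair of densities $(b^*b,\,bb^*)$. The genuine gap is the step that carries your whole argument: the claim that the tracial property of $\nu$, together with commutativity of $\f$, ``identifies the data carried by $b^*b$ and by $bb^*$'' and collapses the two slots to a single positive datum. Traciality gives $\nu(bb^*a)=\nu(b^*ab)$, \emph{not} $\nu(b^*ba)$; the functionals $\nu(b^*b\,\cdot)$ and $\nu(bb^*\,\cdot)$ agree against $a=1$ and central $a$, but not in general. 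Concretely, take $\a=M_2(\CC)$, $\f=\CC$, $\nu=\mathrm{tr}$, and $b=e_{12}$ (a matrix unit): then $b^*b=e_{22}$, $bb^*=e_{11}$, and $\mathrm{tr}(e_{22}a)\ne\mathrm{tr}(e_{11}a)$ for suitable anti-Hermitian $a$, so the two slots carry independent information. Your realization step inherits the problem: choosing $b_t\ge 0$ with $b_t^2=t\,b_0^*b_0+(1-t)b_1^*b_1$ matches the first slot exactly, but the second slot then needs $\nu(b_ib_i^*a)=\nu(b_i^*b_ia)$ for all $a$, which is exactly the unproved collapse. Worse, $b^*b$ and $bb^*$ always have the same spectrum, so the pair cannot be prescribed independently: with $b_0=e_{11}$, $b_1=e_{12}$, $t=\frac12$, the target pair $(\frac12 I,\,e_{11})$ is not of the form $(b^*b,bb^*)$ for any $b$. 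The collapse you invoke \emph{does} hold when $\nu$ is multiplicative, i.e.\ comes from a $*$-homomorphism onto $\f$ as in the paper's second example of a valuation (then $\nu(b^*ba)=\nu(b)^*\nu(b)\nu(a)=\nu(bb^*a)$), and of course when $\a$ is commutative --- which is precisely the scalar computation (\ref{momento escalar}) you extrapolated from --- but mere traciality does not deliver it.

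For comparison, the paper's proof avoids the two-density matching problem altogether: it writes $q=p_z$ with \emph{both} diagonal entries expressed through $zz^*$, obtaining $f_{\tilde a}(p_z)=\nu\big((1-zz^*)^{-1}(a_1-zz^*a_2)\big)$, so the densities are $c_1=(1-zz^*)^{-1}$ and $c_2=-(1-zz^*)^{-1}zz^*=1-c_1$. The image is then identified, via $\nu$, with the affine slice $\{(c_1,c_2)\in\a^2:\ c_1\in G^+,\ c_1+c_2=1\}$, and convexity is immediate: a convex combination $tc_1+(1-t)c_1'$ stays in $G^+$ (indeed $\ge 1$) and is realized by the positive element $z=\big(1-(tc_1+(1-t)c_1')^{-1}\big)^{1/2}$. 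You should note, however, that your worry was pointing at something real: in noncommutative $\a$ the $(1,1)$ entry of $p_z$ is $(1-z^*z)^{-1}$ while the $(2,2)$ entry is $-(1-zz^*)^{-1}zz^*$ (equivalently $q_{11}=1+b^*b$, $q_{22}=-bb^*$), so the affine relation $c_1+c_2=1$ on which the paper's argument rests presupposes exactly the identification $\nu\big((1-z^*z)^{-1}\,\cdot\big)=\nu\big((1-zz^*)^{-1}\,\cdot\big)$ that traciality alone cannot justify. So your proposal is not a proof as it stands, and the missing step is not cosmetic: it is the crux of the theorem, valid for commutative $\a$ or for multiplicative (homomorphism-type) valuations, but requiring a different justification or additional hypotheses beyond the tracial property in general.
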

\begin{proof}
As done previously in Section 9, we shall use the model $\d\simeq\q_\rho$, to prove our statement. Recall that each $q=p_z$, for some $z\in\d$, where $p_z=(1-zz^*)^{-1}\left(\begin{array}{cc} 1 & -z^* \\ z & -zz^*\end{array} \right)$. Then, if $\tilde{a}_0=\left(\begin{array}{cc} a_1 & 0 \\ 0 & a_2 \end{array} \right)\in\mathfrak{U}_0(\theta)$, we get
$$
f_{\tilde{a}}(p_z)=\tau \Big( (1-zz^*)^{-1}\left(\begin{array}{cc} 1  &  -z^* \\ z & -zz^* \end{array}\right) \left( \begin{array}{cc} a_1 & 0 \\ 0 & a_2 \end{array}\right)\Big)=\nu((1-zz^*)^{-1}(a_1-zz^*a_2))
$$
$$
=\nu\Big(\left( \begin{array}{cc} (1-zz^*)^{-1} & -(1-zz^*)^{-1}zz^* \end{array}\right) \left(\begin{array}{c} a_1 \\ a_2 \end{array} \right) \Big).
$$
Therefore, if the elements of $\mathfrak{U}_0(\theta)$ are represented as pairs $(a_1,a_2)\in\a^2_{ah}$, via the valuation $\nu$,  the image of the moment map of the group $\d(\theta)$ identifies with the set of pairs 
$$
\{(c_1,c_2)\in\a^2: c_1\in G^+, c_1+c_2=1\},
$$  
which is clearly a convex set.
\end{proof}

\section{The Poincar\'e disk as a "cotangent bundle"}

In this appendix we present the Poincar\'e disk as the {\it dual} $T^*G^+$ of the tangent bundle $TG^+$, represented as the Poincar\'e half-space $\h$ of $\a$. 

We give a brief the description of the half-space model $\h$ of $\q_\rho$. Recall that 
$$
\h=\{\zeta =x+i y\in\a: x^*=x , y\in G^+\}.
$$
The quadratic form $\theta_H$ of $\h$ is given by 
$$
\theta_H(\xx,\yy)=\xx^* \rho_H \yy=-i (x_1^*y_2-x_2^*y_1),
$$
where $\rho_H=\left( \begin{array}{cc} 0 & -i \\ i & 0 \end{array} \right)$. The hyperboloid $\k_H$ is then
$$
\k_H=\{\xx\in\a^2: x_1\in G \hbox{ and } \theta_H(\xx,\xx)=2\  Im\ x_1^*x_2=1\}.
$$
Note the fact that $x_1\in G$ implies that also $x_2\in\ G$. As in the other model, every $\xx\in\k_H$ gives rise to a projection (its representative in $\q_\rho$):
$p_\xx=\xx\xx^*\rho_H$.  Consider its complement
$$
1-p_\xx=\left(\begin{array}{cc} 1-i x_1x_2^* & i x_1x_1^* \\ -i x_2x_1^* & 1 + i x_2 x_1^* \end{array} \right).
$$
Note the second column $\left( \begin{array}{c} ix_1 x_1^* \\ 1+i x_2x_1^*  \end{array} \right)=\left( \begin{array}{c} ix_1  \\ (x_1^*)^{-1}+i x_2  \end{array} \right) x_1^*$, which means that $\left( \begin{array}{c} ix_1  \\ (x_1^*)^{-1}+i x_2  \end{array} \right)$ generates the nullspace of $p_\xx$. Let us denote this vector by
\begin{equation}\label{x raro}
\xx_{\perp}:=\left( \begin{array}{c} ix_1  \\ (x_1^*)^{-1}+i x_2  \end{array} \right).
\end{equation}
Note that $\theta_H(\xx_\perp, \xx_\perp)=-1$. . The pair $\{\xx,\xx_\perp\}$ forms a $\theta_H$-orthogonal basis for $\a^2$. Any element $\zz\in A^2$ is written
$$
\zz=\xx\theta_H(\xx,\zz)- \xx_\perp \theta_H(\xx_\perp,\zz).
$$
Note also that $\xx_\perp=\xx i + \ee_2 (x_1^*)^{-1}$, which gives $\xx_\perp=\xx-\ee_2 i (x_1^*)^{-1}$.

The map  $\k_H\to\h$, $\xx\mapsto \zeta=x_2x_1^{-1}$  is the $\h$ valued version of the projection map $\xx\to p_\xx$. Its differential at $\xx$, sends the $\vv\in(T\k_H)_\xx$ to
$$
\vv\mapsto \dot{v}:=v_2x_1^{-1}- x_2 x_1^{-1}v_1 x_1^{-1}.
$$
Suppose now that $\vv\in N(p_\xx)$: $\vv=\left(\begin{array}{c} x_1 \\ x_2-i(x_1^*)^{-1}\end{array}\right) \lambda$, so that 
$$
\dot{v}=(x_2-i(x_1^*)^{-1})\lambda x_1^{-1}- x_2x_1^{-1}x_1\lambda x_1^{-1}=(x_2\lambda-i(x_1^*)^{-1}\lambda-x_2\lambda)x_1^{-1}=-i)(x_1^*)^{-1}\lambda x_1^{-1},
$$
and thus $\lambda=i\ x_1^* \dot{v} x_1$. Then, given $\xx$ in $\k_H$  which projects over $\zeta$ in $\h$, and $\dot{v}\in (T\h)_\zeta$, we have the  (lifting) form, as in Section 2
$$
\kappa^H_\xx(\dot{v})=\left( \begin{array}{c} x_1 \\ x_2-i \ x_1(x_1^*)^{-1}\end{array} \right) i x_1^*\dot{v} x_1=\left(\begin{array}{c} x_1x_1^* \\ x_2x_1^*-i\end{array} \right) i \ \dot{v} x_1.
$$
Or equivalently 
$$
\kappa^H_\xx(\dot{v})=(\xx \ i -\ee_2 i \ (x_1^*)^{-1}) i \ x_1^*\dot{v} x_1=(-\xx x_1^* +\ee_2) \dot{v} x_1.
$$
There is a natural global cross section for the map $\k_H\ni\xx\mapsto \zeta\in\h$, namely
$$
\zeta=x+i y \mapsto \left(\begin{array}{c} 1 \\ \zeta \end{array}\right) (2y)^{-1/2}.
$$
Then we have 
\begin{equation}\label{asterisco}
\kappa^H_\xx(\dot{v})=\left(\begin{array}{c} (2y)^{-1} \\ \zeta (2y)^{-1}-i\end{array} \right) i \ \dot{v} (2y)^{-1/2}.
\end{equation} 

We want to compute the Hilbertian product in $\h$. Consider the expressions (\ref{asterisco}) for two tangent  vectors $\dot{v}, \dot{w}$ in $(T\h)_\zeta$:
$$
\vv=\left(\begin{array}{c} (2y)^{-1} \\ \zeta (2y)^{-1}-i\end{array} \right) i \ \dot{v} (2y)^{-1/2} \ , \ \
\ww=\left(\begin{array}{c} (2y)^{-1} \\ \zeta (2y)^{-1}-i\end{array} \right) i \ \dot{w} (2y)^{-1/2}.
$$
Then we have
$$
\theta_H(\vv,\ww)=\langle\dot{v},\dot{w}\rangle= (2y)^{-1/2} \dot{v}^* \theta_H\Big( \left(\begin{array}{c} (2y)^{-1}\\ \zeta (2y)^{-1} -1\end{array}\right), \left(\begin{array}{c} (2y)^{-1}\\ \zeta (2y)^{-1} -1\end{array}\right)
\Big) \dot{w} (2y)^{-1/2}
$$
$$
= -(2y)^{-1/2}\dot{v}^* (2y)^{-1} \dot{w} (2y)^{-1/2}=-\frac14 (y^{-1/2} \dot{v}y^{-1/2})^*(y^{-1/2} \dot{w}y^{-1/2}).
$$
Suppose now that the algebra  has a trace $\tau$ onto a commutative subalgebra $\b\subset\a$. If we take the trace of the above expression, we get
\begin{equation}\label{traza del producto}
\tau\langle \dot{v},\dot{w}\rangle=-\tau((2y)^{-1}\dot{v}^*(2y)^{-1} \dot{w}).
\end{equation}

Here $y^{-1/2} \dot{v}y^{-1/2}$ is the translation to the point $y^{-1/2} \zeta y^{-1/2}$ of the vector $\dot{v}$.  Note that $y^{-1/2} \zeta y^{-1/2}=y^{-1/2} x y^{-1/2}+i$.

Therefore $\langle \dot{v},\dot{w}\rangle_\zeta= \langle \dot{v}_0,\dot{w}_0\rangle_{\zeta_0}$, where $\zeta_0=y^{-1/2} x y^{-1/2}+i$.
When the imaginary part equals $1$, the inner product has the simpler expression $\langle \dot{v}_0,\dot{w}_0\rangle_{\zeta_0}=-\frac14 \dot{v}_0^* \dot{w}_0$. Or explicitely, if $\dot{v}_0=\dot{x}+i\ \dot{y}$ and $\dot{w}_0=\dot{\xi}+i\ \dot{\eta}$,
then $\dot{v}^*_0\dot{w}_0=(\dot{x}\dot{\xi}+\dot{y}\dot{\eta})+i (\dot{x}\dot{\eta}-\dot{y}\dot{\xi})$, and
$$
\tau\langle\dot{v}_0,\dot{w}_0\rangle_{\zeta_0}=-\frac14\{\tau(\dot{x}\dot{\xi}+\dot{y}\dot{\eta})+i\ \tau(\dot{x}\dot{\eta}-\dot{y}\dot{\xi})\}.
$$
Both traces in the above expression are selfadjoint elements of $\b$ (or real numbers if the trace is numerical). The imaginary part of $\tau\langle\dot{v}_0,\dot{w}_0\rangle_{\zeta_0}$ is  essentially the  (trace of the) symplectic form $\omega$. This expression corresponds strictly to $dp\wedge dq$ in the classical setting, where $p$ varies in the imaginary  part and $q$ in the real part. This shows that $\h=TG^+$ as a true dynamical system.

Let us write down the real and imaginary part of $\tau\langle \ , \ \rangle$ at any point $\zeta\in\h$ (non necessarily with $Im \ \zeta=1$). With the current notation, we get
\begin{equation}\label{traza del producto 2}
\tau\langle\dot{v},\dot{w}\rangle_\zeta=-\frac14\{\tau(\dot{x}\dot{\xi}+\dot{y}\dot{\eta})+i\ \tau(\dot{x}\dot{\eta}-\dot{y}\dot{\xi})\},
\end{equation}
where $\dot{x}+i\ \dot{y}=y^{-1/2}\dot{v}y^{-1/2}$ and $\dot{\xi}+i\ \dot{\eta}=y^{-1/2}\dot{w}y^{-1/2}$.

Let us compute now the {\it $1$-form of Liouville} which induces the symplectic form $\omega$ of $\h$. The $1$-form of Liouville is a form on  the cotangent bundle of the manifold. Here it  shall be presented as a $1$-form in the {\it tangent} bundle $TG^+\simeq\h$, identifying the tangent bundle $TG^+$ with the co-tangent bundle $T^*G^+$. To perform this identification, we shall use the  trace $\tau$, and the action of $G$ on $G^+$. Given a vector $\dot{v}\in(T\h)_\zeta$, for $\zeta=x+i\ y\in\h$, the Liouville form $\alpha$ maps $\zeta$ into an element of $\b$. We must project $\dot{v}$ from its tangency point $\zeta$ to the point $y\in G^+$, having in mind that $\zeta=x+i \ y$ represents the tangent $x$ at the point $y$. Finally, we evaluate $x$ in $\dot{v}$. To perform this task, we translate to $y=1$, and compute
\begin{equation}\label{1 forma de liouville}
\alpha_y(\dot{v})=\tau(y^{-1/2} x y^{-1/2} y^{-1/2} \dot{v} y^{-1/2})=\tau(y^{-1} x y^{-1} \dot{v}).
\end{equation}
This is the Liouville 1-form.

Now we differentiate this $1$-form in $\h$, i.e., we compute $d\alpha(\dot{v},v')=\dot{v} \alpha(v')- v' \alpha(\dot{v})-\alpha([\dot{v}, v'])$.  Consider a function in the variables $s,t$ whose derivatives produce the fields $\dot{v}$, $v'$ when differentiated with respect to $t$  and  $s$, respectively. We must compute 
$$
\frac{\partial}{\partial t}\alpha_\zeta(v')-\frac{\partial}{\partial s}\alpha_\zeta(\dot{v}), 
$$
because there is no need to substract $\alpha([\dot{v},v'])$ since it is trivial.
Then
$$
\frac{\partial}{\partial t}\alpha_\zeta(v')=\tau(-y^{-1}\dot{v}y^{-1}x y^{-1} v'+y^{-1}\dot{x}y^{-1} y'-y^{-1}x y^{-1} \dot{y} y^{-1} y'+y^{-1} x y^{-1} \dot{y}')
$$
and
$$
\frac{\partial}{\partial s}\alpha_\zeta(\dot{v})=\tau(-y^{-1} y' y^{-1} x y^{-1} \dot{y}+y^{-1} x' y^{-1}\dot{y}-y^{-1} x y^{-1} y' y^{-1} \dot{y}+y^{-1} x y^{-1} y').
$$
Therefore
$$
(d\alpha)_\zeta(\dot{v},v')=\frac{\partial}{\partial t}\alpha_\zeta(v')-\frac{\partial}{\partial s}\alpha_\zeta(\dot{v})=\tau(y^{-1} \dot{x} y^{-1} y'-y^{-1} x' y^{-1} \dot{y}).
$$
Now, comparing this last expression with the imaginary part of the trace of the Hilbertian product form, and adapting the notation of both computations, we get 
\begin{teo}
$$
(d\alpha)_\zeta(\dot{v},\dot{w})=\omega(\dot{v},\dot{w}).
$$
\end{teo}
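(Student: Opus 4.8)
The plan is to prove the identity by directly comparing the two explicit trace expressions already in hand: formula (\ref{traza del producto 2}) for $\tau\langle\dot v,\dot w\rangle_\zeta$, whose imaginary part is by definition $\omega(\dot v,\dot w)$, and the expression for $(d\alpha)_\zeta(\dot v,v')$ computed immediately above the statement. Both are polynomials in $y^{-1}$ and the real and imaginary parts of the tangent vectors, evaluated under the tracial functional $\tau$, so the entire matter reduces to matching them term by term once a common notation is fixed.

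The one genuine subtlety, and the step I expect to be the main obstacle, is a notational clash between the two computations. In the Liouville computation the symbols $\dot x,\dot y,x',y'$ denote the \emph{raw} real and imaginary parts of $\dot v=\partial\zeta/\partial t$ and $v'=\partial\zeta/\partial s$, whereas in (\ref{traza del producto 2}) the symbols $\dot x,\dot y,\dot\xi,\dot\eta$ denote the \emph{translated} parts, namely the real and imaginary components of $y^{-1/2}\dot v\,y^{-1/2}$ and $y^{-1/2}\dot w\,y^{-1/2}$. First I would fix unambiguous names, writing $\dot v=a+ib$ and $\dot w=c+id$ with $a,b,c,d$ Hermitian, so that the translated components are $y^{-1/2}a\,y^{-1/2}$, and so on. Substituting these into (\ref{traza del producto 2}) and using the cyclicity $\tau(ABC)=\tau(BCA)$ to collapse each product $y^{-1/2}(\cdot)y^{-1/2}\cdot y^{-1/2}(\cdot)y^{-1/2}$ into $y^{-1}(\cdot)y^{-1}(\cdot)$, I would rewrite $\omega(\dot v,\dot w)=\mathrm{Im}\,\tau\langle\dot v,\dot w\rangle_\zeta$ as an antisymmetric trace of the form $\tau\big(y^{-1}a\,y^{-1}d-y^{-1}b\,y^{-1}c\big)$, up to an overall scalar.

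Next I would perform the same cyclic rearrangement on the computed $(d\alpha)_\zeta(\dot v,v')=\tau\big(y^{-1}\dot x\,y^{-1}y'-y^{-1}x'\,y^{-1}\dot y\big)$, setting $\dot x=a$, $\dot y=b$, $x'=c$, $y'=d$ and invoking $\tau(y^{-1}c\,y^{-1}b)=\tau(y^{-1}b\,y^{-1}c)$. This brings $d\alpha$ into exactly the same antisymmetric trace expression $\tau\big(y^{-1}a\,y^{-1}d-y^{-1}b\,y^{-1}c\big)$, whence the asserted equality $(d\alpha)_\zeta(\dot v,\dot w)=\omega(\dot v,\dot w)$. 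The delicate point throughout is bookkeeping of the scalar constants: the $(2y)^{-1/2}$ normalizations in (\ref{traza del producto}) and the normalization built into the definition (\ref{1 forma de liouville}) of $\alpha$ are precisely what must agree in order for the prefactors to cancel, so I would carry these through explicitly rather than suppress them. Finally, the vanishing of the $\alpha([\dot v,v'])$ term, already noted in the computation, is what permits $d\alpha$ to be read off from the two partial derivatives alone, so no additional argument is required there.
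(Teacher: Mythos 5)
Your proposal is correct and is essentially the paper's own proof: the paper establishes the theorem by exactly this comparison, matching $(d\alpha)_\zeta(\dot{v},v')=\tau(y^{-1}\dot{x}y^{-1}y'-y^{-1}x'y^{-1}\dot{y})$ against the imaginary part of (\ref{traza del producto 2}), with the cyclicity of $\tau$ doing precisely the work you describe of converting the translated components $y^{-1/2}(\cdot)y^{-1/2}$ into the $y^{-1}(\cdot)y^{-1}(\cdot)$ pattern (the paper compresses this into ``adapting the notation of both computations''). One point your explicit constant-tracking will surface: taken literally, $\mathrm{Im}\,\tau\langle\dot{v},\dot{w}\rangle_\zeta$ carries the prefactor $-\frac14$ from (\ref{traza del producto 2}) while the computed $d\alpha$ carries none, so the prefactors do not cancel outright; the stated equality holds with $\omega$ normalized as the $dp\wedge dq$ form $\tau(\dot{x}\dot{\eta}-\dot{y}\dot{\xi})$, a normalization the paper absorbs silently in calling the imaginary part ``essentially'' the symplectic form.
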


\begin{rem}
If $x_1,x_2\in (TG^+)_y$, we have that $\zeta_1=x_1+  i \ y$ and  $\zeta_2=x_2+  i \ y$ belong to $\h$. Define
$$
[x_1,x_2]_y=\tau((y^{-1/2}x_1y^{-1/2})(y^{-1/2}x_2y^{-1/2}))=\tau(y^{-1}x_1y^{-1}x_2).
$$
It is  a $\b$-valued, bilinear (it takes selfadjoint values) and positive semidefinite form. Note also that this form is invariant  under the action of $G$ on $G^+$. In fact,  $g\cdot y=(g^{-1})^*yg^{-1}$, for $g\in G$ and $y\in G^+$. The action defines a linear isomorphism in $\a$, so that the same formula gives the induced action in the tangent spaces of $G^+$: $g\cdot x=(g^{-1})^*xg$, if $x\in(TG^+)_y$.  A straightforward computation shows that
$$
[g\cdot x_1,g\cdot x_2]_{g\cdot y}=[x_1,x_2]_y.
$$
If $\a=M_n(\mathbb{C})$, $\b=\mathbb{C}$ and $\tau$ is the usual trace for $n\times n$ matrices, this inner product is the usual Riemannian metric for the homogeneous space of positive definite $n\times n$ matrices.

On the  other hand, if we embed $G^+\hookrightarrow \h$, by means of $y\mapsto i \ y$, i.e.,  we regard $G^+$  as the imaginary positive axis of $\h$, then for $x_1,x_2 \in(TG^+)_y$ it holds 
$$
\langle i\ x_1, i \ x_2\rangle_{i\ y}=-\frac14(y^{-1/2}i \ x_1 y^{-1/2})^*(y^{-1/2} i \ x_2 y^{-1/2})=-\frac14(y^{-1/2}x_1y^{-1/2})(y^{-1/2} x_2 y^{-1/2}),
$$
and, thus,
$$
\tau \langle i\ x_1, i \ x_2\rangle_{i\ y}=-\frac14[x_1,x_2]_y.
$$
\end{rem}
This means that the trace of the Hilbertian product, restricted to the positive imaginary axis in $\h$, gives, essentially, the Riemannian metric of the space $G^+$.

\end{document}